\newtheorem{thm}{Theorem}[section]
\newtheorem{lem}[thm]{Lemma}
\newtheorem{prop}[thm]{Proposition}
\theoremstyle{definition}
\newtheorem{defin}[thm]{Definition}
\numberwithin{equation}{section}
\newcommand{\cB}{{\mathcal B}}
\newcommand{\cD}{{\mathcal D}}
\newcommand{\cM}{{\mathcal M}}
\newcommand{\cQ}{{\mathcal Q}}
\newcommand{\cS}{{\mathcal S}}
\newcommand{\N}{{\mathbb N}}
\newcommand{\R}{{\mathbb R}}
\def\al{\alpha}
\def\bt{\beta}
\def\gm{\gamma}
\def\kp{\kappa}
\def\lm{\lambda}
\def\sg{\sigma}
\def\Om{\Omega}
\def\0{\emptyset}
\def\1{\textbf{\rm 1}}
\def\6{\partial}
\def\8{\infty}
\def\lt{\left}
\def\rt{\right}
\def\wt{\widetilde}
\DeclareMathOperator*{\essinf}{\,\text{ess\,inf}\,}
\begin{document}

\title
[Fractional operators on weighted Morrey spaces]
{Fractional operators on weighted Morrey spaces}

\author[S.~Nakamura]{Shohei~Nakamura}
\address{Department of Mathematics and Information Sciences, Tokyo Metropolitan University, 1-1 Minami Ohsawa, Hachioji, Tokyo 192-0397, Japan}
\email{pokopoko9131@icloud.com}

\author[Y.~Sawano]{Yoshihiro~Sawano}
\address{Department of Mathematics and Information Sciences, Tokyo Metropolitan University, 1-1 Minami Ohsawa, Hachioji, Tokyo 192-0397, Japan}
\email{ysawano@tmu.ac.jp}

\author[H.~Tanaka]{Hitoshi~Tanaka}
\address{Research and Support Center on Higher Education for the hearing and Visually Impaired, National University Corporation Tsukuba University of Technology, Kasuga 4-12-7, Tsukuba City, Ibaraki, 305-8521 Japan}
\email{htanaka@k.tsukuba-tech.ac.jp}

\thanks{
The second author is partially supported by Grand-in-Aid for 
Scientific Research (C), No. 16K05209, for Japan Society for the Promotion of Science. 
The third author is supported by 
Grant-in-Aid for Scientific Research (C) (15K04918), 
the Japan Society for the Promotion of Science. 
}

\date{}

\subjclass[2010]{42B25,\,42B35.}

\keywords{
Adams inequality;
fractional integral operator;
fractional maximal operator;
Morrey space;
one weight norm inequality.
}

\begin{abstract}
A necessary condition and a sufficient condition 
for one weight norm inequalities on Morrey spaces to hold are given 
for the fractional maximal operator and the fractional integral operator. 
We clarify the difference 
between the behavior of the fractional maximal operator 
and the one of the fractional integral operator 
which is originated from the structure of Morrey spaces. 
Both the necessary condition
and 
the sufficient condition 
are also verified for the power weights. 
\end{abstract}

\maketitle

\section{Introduction}\label{sec1}
The purpose of this paper is to develop a theory of weights 
for fractional maximal and integral operators
on Morrey spaces. 
There are several results concerning the weight theory on Morrey spaces 
by assuming the $A_p$ conditions (for example, \cite{ISST,NS15}). 
However, it was pointed in \cite{Sam1,Ta} that 
the $A_p$ condition is not suitable for the Morrey setting. 
In fact, it is too strong.
So, the problem we address in is to establish the 
weight theory on Morrey spaces without the $A_p$ condition. 
After C.~Morrey introduced Morrey spaces,
many people realized that
Morrey spaces are used for various purpose.
One of the reasons is that
Morrey spaces
describe local regularity more precisely than Lebesgue spaces.
As a result
we can use Morrey spaces widely 
not only in harmonic analysis 
but also in partial differential equations 
(cf. \cite{GT}). 

To define Morrey spaces,
we shall consider all cubes in $\R^n$ 
which have their sides parallel to the coordinate axes.
We denote by $\cQ$ the family of all such cubes. 
For a cube $Q\in\cQ$ we use 
$\ell(Q)$ to denote the sides length of $Q$, 
$c(Q)$ to denote the center of $Q$, 
$|Q|$ to denote the volume of $Q$ and 
$cQ$ to denote the cube with the same center as $Q$ 
but with side-length $c\ell(Q)$. 
Let $0<p\le p_0<\8$ be two real parameters. 
For $f\in L^p_{\text{loc}}(\R^n)$ 
define 
\begin{equation}\label{Morrey}
\|f\|_{\cM^{p_0}_p}
=
\sup_{Q\in\cQ}
|Q|^{1/p_0}
\lt(\fint_{Q}|f|^p\,dx\rt)^{1/p},
\end{equation}
where we have used a barred integral 
to denote the integral average
\[
\fint_{Q}f\,dx
=
\frac{1}{|Q|}\int_{Q}f\,dx.
\]
The Morrey space $\cM^{p_0}_p(\R^n)$ 
is defined to be the subset 
of all $L^p$ locally integrable functions $f$ on $\R^n$ 
for which $\|f\|_{\cM^{p_0}_p}$ is finite. 
It is easy see that 
$\|\cdot\|_{\cM^{p_0}_p}$
is a norm if $p\ge 1$ and 
is a quasi-norm if $p\in(0,1)$. 
The completeness of Morrey spaces 
follows easily by that of Lebesgue spaces. 
Applying H\"{o}lder's inequality, 
we see also that 
\[
\|f\|_{\cM^{p_0}_{p_1}}
\ge
\|f\|_{\cM^{p_0}_{p_2}}
\text{ for all }
p_0\ge p_1\ge p_2>0.
\]
This tells us that 
\begin{equation}\label{1.1}
L^{p_0}(\R^n)
=\cM^{p_0}_{p_0}(\R^n)
\subset \cM^{p_0}_{p_1}(\R^n)
\subset \cM^{p_0}_{p_2}(\R^n)
\text{ for all }
p_0\ge p_1\ge p_2>0.
\end{equation}
Sometimes it is convenient
to define Morrey spaces in an equivalent form.
Let $1<p<p_0<\8$ and 
define $\lm$ by $\lm/n=1-p/p_0$. 
We will use the notation 
\[
\|f\|_{L^{p,\lm}}
=
\sup_{Q\in\cQ}
\lt(\frac1{|Q|^{\lm/n}}\int_{Q}|f|^p\,dx\rt)^{1/p}
\]
and 
$L^{p,\lm}(\R^n)$ 
to denote 
$\|f\|_{\cM^{p_0}_p}$ 
and 
$\cM^{p_0}_p(\R^n)$, 
respectively. 

As we mentioned above,
Morrey spaces reflect local properties
of the functions.
Due to this property,
we can describe the boundedness property
of the linear (or sublinear) operators
more precisely than Lebesgue spaces.
We envisage the following operators in this paper.
\begin{itemize}
\item
Given $0<\al<n$ and a measurable function $f$, 
we define the fractional integral operator $I_{\al}$ by 
\[
I_{\al}f(x)
=
\int_{\R^n}\frac{f(y)}{|x-y|^{n-\al}}\,dy.
\]
\item
Given $0\le\al<n$ and a measurable function $f$, 
we define the fractional maximal operator $M_{\al}$ by 
\[
M_{\al}f(x)
=
\sup_{Q\in\cQ}
\1_{Q}(x)|Q|^{\al/n}\fint_{Q}|f|\,dy,
\]
where $\1_{Q}$ denotes the characteristic function of the cube $Q$. 
If $\al=0$ we drop the subscript $\al$. 
Thus, $M=M_0$ is the Hardy-Littlewood maximal operator. 
\end{itemize}

Based on the definition above,
let us see 
two remarkable results
asserting for what parameters
$p,p_0,q,q_0$ 
the fractional integral operator
$I_\al$ is bounded
from
$\cM^{p_0}_p$
to
$\cM^{q_0}_q$,
where $1<p\le p_0<\8$ 
and $1<q\le q_0<\8$. 
The first one is due to Spanne 
(unpublished): 
the inequality 
\begin{equation}\label{1.2}
\|I_{\al}f\|_{\cM^{q_0}_q}
\le C
\|f\|_{\cM^{p_0}_p}
\end{equation}
holds if 
\[
\frac1{q_0}=\frac1{p_0}-\frac{\al}n
\text{ and }
\frac1q=\frac1p-\frac{\al}n.
\]
The second one is due to Adams \cite{Ad} 
(see also \cite{CF}): 
the inequality \eqref{1.2} holds if 
\[
\frac1{q_0}=\frac1{p_0}-\frac{\al}n
\text{ and }
\frac{q}{q_0}=\frac{p}{p_0}.
\]
A simple arithmetic shows that 
\[
\frac1p-\frac{\al}n
=
\frac{p_0}p
\lt(
\frac1{p_0}
-
\frac{p}{p_0}\cdot\frac{\al}n
\rt)
\ge
\frac{p_0}p
\lt(
\frac1{p_0}-\frac{\al}n
\rt)
=
\frac{p_0}{pq_0}.
\]
This inequality together with \eqref{1.1} says that
the Spanne target space is larger than the Adams target space.
Thus, we can say that 
Adams improved the result of Spanne. 
Furthermore, 
Olsen \cite{Ol} showed by an example that the result of Adams is optimal. 

By weights we will always mean 
non-negative, 
locally integrable functions 
which are positive on a set of positive measure. 
Given a measurable set $E$ and a weight $w$, 
$w(E)=\int_{E}w$. 
Given $1<p<\8$, $p'=p/(p-1)$ 
will denote the conjugate exponent number of $p$. 

Given $p>1$, 
one says that a weight $w$ on $\R^n$ 
belongs to the Muckenhoupt class $A_p$ 
if
\[
[w]_{A_p}
=
\sup_{Q\in\cQ}
\frac{w(Q)\sg(Q)^{p-1}}{|Q|^p}
<\8,\quad
\sg=w^{1-p'}.
\]
For $p=1$, 
one says that a weight $w$ on $\R^n$ 
belongs to the Muckenhoupt class $A_1$ 
if 
\[
[w]_{A_1}
=
\sup_{Q\in\cQ}
\frac{w(Q)/|Q|}{\essinf_{x\in Q}w(x)}
<\8.
\]
In \cite{Mu}, 
Muckenhoupt showed that, 
for $p>1$, 
the weights satisfying the $A_p$ condition 
are exactly the weights for which 
the Hardy-Littlewood maximal operator $M$ is bounded 
on $L^p(w)$. 

Let $0<\al<n$, 
$1<p<n/\al$ and $q$ be defined by 
$1/q=1/p-\al/n$. 
In \cite{MuWh}, 
Muckenhoupt and Wheeden characterized 
the weighted strong type inequality 
for fractional maximal and integral operators 
in terms of the so-called $A_{p,q}$ condition. 
They showed that the inequality 
\begin{equation}\label{1.3}
\|T_{\al}f\|_{L^q(w^q)}
\le C
\|f\|_{L^p(w^p)},
\end{equation}
where $T_{\al}$ is the operator $I_{\al}$ or $M_{\al}$, 
holds if and only if 
$w\in A_{p,q}$. That is, 
\[
[w]_{A_{p,q}}
=
\sup_{Q\in\cQ}
\lt(\fint_{Q}w^q\rt)^{1/q}
\lt(\fint_{Q}w^{-p'}\rt)^{1/p'}
<\8.
\]

If $p>1$, we have that 
$w\in A_{p,q}$ 
if and only if 
$w^q\in A_{1+q/p'}$; 
this follows at once from the definition. 

The following is the sharp weighted bound
for the fractional integral operator $I_\alpha$.

\begin{thm}[\text{\cite[Theorem 2.6]{LMPT}}]
\label{thm1.1}
Let $w\in A_{p,q}$. 
Let $0<\al<n$, 
$1<p<n/\al$ and $q$ be defined by 
$1/q=1/p-\al/n$. Then 
\[
\|I_{\al}\|_{L^p(w^p)\to L^q(w^q)}
\le C
[w]_{A_{p,q}}^{(1-\al/n)\max\{q,p'\}}.
\]
Furthermore, the power $(1-\al/n)\max\{q,p'\}$ is sharp.
\end{thm}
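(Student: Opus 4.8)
The plan is to pass to a dyadic model of $I_{\al}$, settle the sharp bound for the model by the two-weight testing machinery, and then produce power weights that saturate the exponent.

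First I would reduce to a dyadic operator. For $0<\al<n$ there is a pointwise domination $I_{\al}f\lesssim\sum_{j=1}^{3^n}I_{\al}^{\cD_j}f$, where $\cD_1,\dots,\cD_{3^n}$ are finitely many shifted dyadic grids and $I_{\al}^{\cD}f=\sum_{Q\in\cD}|Q|^{\al/n}\big(\fint_Q|f|\big)\1_Q$ is the dyadic fractional integral; since the number of grids is a dimensional constant, it suffices to bound a single $I_{\al}^{\cD}$ from $L^p(w^p)$ to $L^q(w^q)$. Writing $\sg=w^{-p'}$ and $u=w^q$, the hypothesis reads $\langle u\rangle_Q^{1/q}\langle\sg\rangle_Q^{1/p'}\le[w]_{A_{p,q}}$, and since the two exponents satisfy $1/q+1/p'=1-\al/n$ this is the same as $u(Q)^{1/q}\sg(Q)^{1/p'}\le[w]_{A_{p,q}}\,|Q|^{1-\al/n}$.

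For the positive operator $I_{\al}^{\cD}$ with $1<p\le q<\8$ the Sawyer-type two-weight theorem gives its operator norm, up to dimensional constants, as the sum $\mathfrak T+\mathfrak T_{*}$ of the forward and dual testing constants, defined by $\int_R\big(I_{\al}^{\cD}(\sg\1_R)\big)^q u\le\mathfrak T^q\,\sg(R)^{q/p}$ and $\int_R\big(I_{\al}^{\cD}(u\1_R)\big)^{p'}\sg\le\mathfrak T_*^{p'}\,u(R)^{p'/q'}$ for all $R\in\cQ$. Everything thus reduces to the sharp evaluation of these two constants. Fixing $R$ and splitting $I_{\al}^{\cD}(\sg\1_R)$ into the long-range part (cubes $Q\supseteq R$) and the short-range part (cubes $Q\subsetneq R$), the long-range part is summed as a convergent geometric series in the scale and contributes only a single factor $[w]_{A_{p,q}}$ via the displayed inequality. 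The short-range part is where the genuine power is produced: iterating the $A_{p,q}$ inequality down the scales and exploiting the self-improvement (reverse H\"older) built into the exponents $1/q,1/p'$ yields $\mathfrak T\lesssim[w]_{A_{p,q}}^{(1-\al/n)q}$, and the same computation for the dual quantity gives $\mathfrak T_*\lesssim[w]_{A_{p,q}}^{(1-\al/n)p'}$; feeding these into $\mathfrak T+\mathfrak T_{*}$ produces $[w]_{A_{p,q}}^{(1-\al/n)\max\{q,p'\}}$. The main obstacle is precisely this step: keeping the exponent sharp while summing the short-range contribution, so that no superfluous power of $[w]_{A_{p,q}}$ is introduced and the asymmetry between the forward and dual conditions is respected. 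That asymmetry, with the forward side carrying $q$ and the dual side carrying $p'$, is exactly what becomes the $\max\{q,p'\}$.

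Finally, for the optimality of the power I would test with radial power weights $w(x)=|x|^{\dl}$ and functions of the form $|x|^{-\bt}\1_{B}$. For fixed $p,q$ one lets $\dl$ tend to the endpoint of the interval on which $w\in A_{p,q}$, so that $[w]_{A_{p,q}}\to\8$ at a controlled rate; a direct computation of $I_{\al}$ on the test function then shows that the ratio of the two norms grows exactly like $[w]_{A_{p,q}}^{(1-\al/n)\max\{q,p'\}}$, the forward endpoint (where $q\ge p'$) producing the power $q$ and the opposite endpoint (where $q<p'$) the power $p'$. Letting the weight degenerate proves that the exponent cannot be lowered.
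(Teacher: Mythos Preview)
This theorem is not proved in the paper: it is quoted verbatim from \cite[Theorem~2.6]{LMPT} and stated as background, with no argument supplied. There is therefore no ``paper's own proof'' to compare your proposal against.

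For what it is worth, your outline follows the broad architecture of the original LMPT argument (reduction to a dyadic model, control of the dyadic operator via Sawyer testing constants, and power-weight examples for sharpness). The one place where your sketch is genuinely soft is the short-range testing estimate: the sentence ``iterating the $A_{p,q}$ inequality down the scales and exploiting the self-improvement (reverse H\"older) built into the exponents'' does not by itself explain why the testing constant picks up exactly the power $(1-\al/n)q$ rather than something worse. In LMPT this step is handled by a careful Carleson-embedding/stopping-time argument, and that is where the real work lies; you should be aware that this is the heart of the matter, not a routine summation.
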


For $E\subset\R^n$ and $0<\al\le n$, 
the $\al$-dimensional Hausdorff content of $E$ is defined by
\[
H^{\al}(E)
=
\inf\lt\{\sum_jl(Q_j)^{\al}\rt\},
\]
where the infimum is taken over all coverings of $E$ 
by countable families of cubes 
$\{Q_j\}\subset\cQ$. 
The Choquet integral of $\phi\ge 0$ 
with respect to the Hausdorff content $H^{\al}$ is defined by 
\[
\int_{\R^n}\phi\,dH^{\al}
=
\int_0^{\8}
H^{\al}(\{y\in\R^n:\,\phi(y)>t\})
\,dt.
\]

\begin{defin}\label{def1.2}
Let $0<\lm<n$. Define 
the basis $\cB_{\lm}$ 
to be the set of all weights $b$ such that 
$b\in A_1$ and 
$\int_{\R^n}b\,dH^{\lm}\le 1$.
\end{defin}

Let $1<p<p_0<\8$ and 
set $\lm/n=1-p/p_0$. 
Then one has 
(see \cite{AX2} and also \cite{Ta})
\begin{equation}\label{1.4}
\|f\|_{\cM^{p_0}_p}
=
\|f\|_{L^{p,\lm}}
\approx
\sup_{b\in\cB_{\lm}}
\lt(\int_{\R^n}|f|^pb\,dx\rt)^{1/p}.
\end{equation}

\begin{defin}\label{def1.3}
Let $1<p<\8$ and $0<\lm<n$. 
The space $H^{p,\lm}(\R^n)$ 
is defined by the set of all measurable functions $f$ on $\R^n$ 
with the quasi norm 
\[
\|f\|_{H^{p,\lm}}
=
\inf_{b\in\cB_{\lm}}
\lt(\int_{\R^n}|f|^pb^{1-p}\,dx\rt)^{1/p}
<\8.
\]
\end{defin}

Let $1<p<p_0<\8$ and 
set $\lm/n=1-p/p_0$. 
For any $b\in\cB_{\lm}$ and 
for all non-negative functions 
$f\in L^{p,\lm}(\R^n)$ 
and 
$g\in H^{p',\lm}(\R^n)$, 
by H\"{o}lder's inequality that 
\begin{align*}
\int_{\R^n}fg\,dx
=
\int_{\R^n}fb^{1/p}gb^{-1/p}\,dx
\le
\lt(\int_{\R^n}f^pb\,dx\rt)^{1/p}
\lt(\int_{\R^n}g^{p'}b^{1-p'}\,dx\rt)^{1/p'},
\end{align*}
which implies by \eqref{1.4} 
H\"{o}lder's inequality for Morrey spaces
\begin{equation}\label{1.5}
\int_{\R^n}fg\,dx
\le C
\|f\|_{L^{p,\lm}}
\|g\|_{H^{p',\lm}}.
\end{equation}

In this paper 
we shall establish the following theorems:

\begin{thm}\label{thm1.4}
Let 
$0\le\al<n$, 
$1<p<p_0<\8$, 
$1<q<q_0<\8$ 
and $w$ be a weight. 
Suppose that 
\[
\frac1{q_0}=\frac1{p_0}-\frac{\al}n
\text{ and }
\frac{q}{q_0}=\frac{p}{p_0}.
\]
Set 
$\lm/n=1-p/p_0=1-q/q_0$. 
Consider the following three statements\text{:}

\begin{itemize}
\item[\text{(a)}] 
There exists a constant $C_1>0$ 
such that 
\[
\|(M_{\al}f)w\|_{L^{q,\lm}}
\le C_1
\|fw\|_{L^{p,\lm}}
\]
holds for every measurable function $f$ with 
$fw\in L^{p,\lm}(\R^n)$;
\item[\text{(b)}] 
There exists a constant $C_2>0$ 
such that 
\begin{equation}\label{1.6}
\sup_{Q\in\cQ}
|Q|^{\al/n-1}
\|w\1_{Q}\|_{L^{q,\lm}}
\|w^{-1}\1_{Q}\|_{H^{p',\lm}}
\le C_2;
\end{equation}
\item[\text{(c)}] 
For any $Q_0\in\cQ$, 
there exists 
$b_{Q_0}\in\cB_{\lm}$ 
satisfying the following\text{:}
\begin{equation}\label{1.7}
\sup_{\substack{Q\in\cQ \\ Q\subset Q_0}}
\lt(\fint_{Q}w^q\,dx\rt)^{1/q}
\lt(\fint_{Q}[wb_{Q_0}^{1/p}]^{-p'}\,dx\rt)^{1/p'}
\le C_3
\ell(Q_0)^{\lm/p}
\end{equation}
and 
\begin{equation}\label{1.8}
[wb_{Q_0}^{1/p}]_{A_s}\le C_3
\text{ for some }s\ge 1,
\end{equation}
where the constant $C_3$ is independent of the choices $Q_0$.
\end{itemize}

\noindent
Then,

\begin{itemize}
\item[\text{(I)}] 
That \text{(a)} implies \text{(b)}
with $C_2\le C C_1$;
\item[\text{(II)}] 
Those \text{(b)} and \text{(c)} imply \text{(a)} 
with 
$C_1\le C
(C_2^{(q-p)/q}
C_3^{(p+1)/q}
+C_2)$. 
\end{itemize}
\end{thm}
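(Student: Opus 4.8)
The plan is to establish the two implications separately: the necessity \text{(I)} by testing, and the sufficiency \text{(II)} by a local/global splitting adapted to the Morrey scale.

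For \text{(I)}, I would fix $Q\in\cQ$ and test \text{(a)} against nonnegative $f$ with $\supp f\subset Q$. For such $f$ and every $x\in Q$ one has $M_\al f(x)\ge|Q|^{\al/n-1}\int_Q f$, whence
\[
|Q|^{\al/n-1}\lt(\int_Q f\rt)\|w\1_Q\|_{L^{q,\lm}}\le\|(M_\al f)w\|_{L^{q,\lm}}\le C_1\|fw\|_{L^{p,\lm}}.
\]
Taking the supremum over such $f$, the estimate \eqref{1.6} reduces to the lower bound $\sup\{\int_Q f:\,f\ge0,\ \|fw\|_{L^{p,\lm}}\le1\}\gtrsim\|w^{-1}\1_Q\|_{H^{p',\lm}}$, i.e.\ to the sharpness of H\"older's inequality \eqref{1.5}. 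This is precisely the statement that $H^{p',\lm}$ is the associate space of $L^{p,\lm}$ and follows from \eqref{1.4}: taking $b_*\in\cB_\lm$ that almost attains the infimum of Definition \ref{def1.3} for $g=w^{-1}\1_Q$ and letting $f$ be the induced H\"older extremiser $f\sim w^{-p'}b_*^{-p'/p}\1_Q$ realises the supremum up to a constant. Since replacing $f$ by $f\1_Q$ neither increases $\|fw\|_{L^{p,\lm}}$ nor decreases $\int_Q f$, the extremiser may be taken supported in $Q$, and \text{(I)} follows with $C_2\le CC_1$.

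For \text{(II)}, fix the cube $Q_0$ at which the output norm is tested and split $f=f_1+f_2$, $f_1=f\1_{3Q_0}$; by sublinearity $M_\al f\le M_\al f_1+M_\al f_2$. For the \emph{global} part, for $x\in Q_0$ only cubes $Q\ni x$ meeting $(3Q_0)^c$ contribute, and these satisfy $\ell(Q)\gtrsim\ell(Q_0)$; estimating $\int_{3^kQ_0}|f|$ by \eqref{1.5} and inserting \eqref{1.6} at $Q=3^kQ_0$, together with the monotonicity $\|w\1_{3^kQ_0}\|_{L^{q,\lm}}\ge\|w\1_{Q_0}\|_{L^{q,\lm}}$, bounds $M_\al f_2$ on $Q_0$ by $CC_2\|fw\|_{L^{p,\lm}}\,\|w\1_{Q_0}\|_{L^{q,\lm}}^{-1}$; multiplying by $\|w\1_{Q_0}\|_{L^{q,\lm}}$ produces the summand $C_2$. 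The \emph{local} part is the crux. With $b_{Q_0}$ from \text{(c)} and $v=wb_{Q_0}^{1/p}$, \eqref{1.4} gives $\|f_1\|_{L^p(v^p)}=(\int(f_1w)^pb_{Q_0})^{1/p}\le C\|fw\|_{L^{p,\lm}}$, and the identity $w^{-p'}b_{Q_0}^{1-p'}=v^{-p'}$ yields $\|w^{-1}\1_Q\|_{H^{p',\lm}}\le(\int_Q v^{-p'})^{1/p'}$, tying \eqref{1.7} to \eqref{1.6}; so it suffices to prove the localised bound $(\int_{Q_0}(M_\al f_1)^qw^q)^{1/q}\lesssim\ell(Q_0)^{\lm/q}\,C_2^{(q-p)/q}C_3^{(p+1)/q}\|f_1\|_{L^p(v^p)}$.

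The obstacle I expect to spend the most effort on is this last estimate. Note that \eqref{1.7} is only the averaged Muckenhoupt-type condition \emph{without} any power of $|Q|$, whereas the genuine two-weight $A_{p,q}$ condition for $M_\al$ between $L^p(v^p)$ and $L^q(w^q)$ carries the factor $|Q|^{\al/n+1/q-1/p}$, an exponent that is negative here; hence \eqref{1.7} alone cannot be sufficient and the small scales must be tamed separately. This is exactly the role of \eqref{1.8}: as $v\in A_s\subset A_\infty$, a sparse domination of $M_\al f_1$ on $Q_0$ by a family $\cS$ reduces, via H\"older and \eqref{1.7}, to the Carleson embedding
\[
\sum_{Q\in\cS}|Q|^{q(\al/n+1/q-1/p)}\lt(\int_Q f_1^pv^p\rt)^{q/p}\lesssim(\text{const})\,\|f_1\|_{L^p(v^p)}^q,
\]
whose convergence rests on the geometric decay of the local energies $\int_Q f_1^pv^p$ along the stopping chains of $\cS$ furnished by the $A_\infty$ regularity of $v$, while \eqref{1.6} controls the summation over dyadic scales. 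Tracking constants so that the Morrey/H\"older factor carries $C_2^{(q-p)/q}=C_2^{1-p_0/q_0}$ (the Adams exponent, since $p/q=p_0/q_0$) and the maximal factor carries $C_3^{(p+1)/q}$ is the delicate bookkeeping that yields \text{(II)} in the stated form.
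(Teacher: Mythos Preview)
Your treatment of (I) and of the global piece in (II) coincides with the paper's argument. The genuine gap is in the local estimate. After applying H\"older and \eqref{1.7} to the sparse sum you obtain a prefactor $C_3^{\,q}\ell(Q_0)^{q\lm/p}$ rather than the required $\ell(Q_0)^{\lm}$, and the residual sum is the Carleson embedding you display,
\[
\sum_{Q\in\cS}|Q|^{q(\al/n+1/q-1/p)}\Big(\int_{3Q}f_1^{\,p}v^p\Big)^{q/p}\lesssim\|f_1\|_{L^p(v^p)}^q,
\]
which is false in general: the exponent of $|Q|$ is negative, so concentrating $f_1^{\,p}v^p$ on a single small cube of the family makes the left side blow up while the right stays bounded. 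The ``geometric decay of $\int_Q f_1^{\,p}v^p$'' you invoke cannot be extracted from $v\in A_\8$, since that hypothesis constrains the weight $v$, not the arbitrary function $f_1$, and the stopping family $\cS$ is selected from averages of $f_1$, not from these energies. Nor does \eqref{1.6} help after this step: once H\"older has been applied, the information it carries about $w$ has already been spent.

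The paper's local argument is organised differently. With $u=w^q$, $\sg=v^{-p'}$ and $\text{(iii)}=(|Q|^{\al/n-1}\int_{3Q}f)^qu(E_Q)/\sg(E_Q)$, one factorises $\text{(iii)}=X^{1-p/q}Y^{p/q}$ where $X=\text{(iii)}\,\sg(E_Q)$ and $Y=\text{(iii)}\,\sg(E_Q)^{1-q/p}$. The factor $X$ is bounded using \eqref{1.6} alone --- this is how condition (b) enters the \emph{local} part, producing both the power $C_2^{\,q-p}$ and the scale factor $|Q|^{\lm/n}$ that exactly cancels the surplus $\ell(Q_0)^{\lm(q/p-1)}$ --- while $Y$ is bounded via \eqref{1.7} and \eqref{1.8}. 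The role of \eqref{1.8} is not decay of $\int_Q f^pv^p$ but the doubling $\sg(9Q)\le CC_3\,\sg(E_Q)$ for the dual weight; after the $X,Y$ bounds the remaining sum is
\[
\sum_{Q\in\cS}\Big(\frac{\int_{3Q}f}{\sg(9Q)}\Big)^p\sg(E_Q)\le\|M_\sg^c[f\sg^{-1}]\|_{L^p(\sg)}^p\le(p')^p\|fv\|_{L^p}^p\le C\|fw\|_{L^{p,\lm}}^p,
\]
the universal $L^p(\sg)$ bound for the centered weighted maximal operator --- no Carleson-type argument is needed.
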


Unfortunately, 
because of the additional condition $(c)$, 
Theorem \ref{thm1.4} does not completely characterize the 
boundedness of $M_{\al}$ on weighted Morrey spaces. 
However, by employing Theorem \ref{thm1.4}, 
we can still settle down the problem 
at least for power weights; 
see Proposition \ref{pr-power}.

For the fractional integral operator $I_\al$, 
we have the following.
\begin{thm}\label{thm1.5}
Let 
$0<\al<n$, 
$1<p<p_0<\8$, 
$1<q<q_0<\8$ 
and $w$ be a weight. 
Suppose that 
\[
\frac1{q_0}=\frac1{p_0}-\frac{\al}n
\text{ and }
\frac{q}{q_0}=\frac{p}{p_0}.
\]
Set 
$\lm/n=1-p/p_0=1-q/q_0$. 
Consider the following three statements\text{:}

\begin{itemize}
\item[\text{(a)}] 
There exists a constant $C_1>0$ 
such that 
\[
\|(I_{\al}f)w\|_{L^{q,\lm}}
\le C_1
\|fw\|_{L^{p,\lm}}
\]
holds for every function $f$ with 
$fw\in L^{p,\lm}(\R^n)$;
\item[\text{(b)}] 
There exists a constant $C_2>0$ 
such that 
\[
\|(M_{\al}f)w\|_{L^{q,\lm}}
\le C_2
\|fw\|_{L^{p,\lm}}
\]
holds for every function $f$ with 
$fw\in L^{p,\lm}(\R^n)$;
\item[\text{(c)}] 
There exists $\kp>1$ such that 
\begin{equation}\label{1.9}
2\|w\1_{Q}\|_{L^{q,\lm}}
\le
\|w\1_{\kp Q}\|_{L^{q,\lm}}
\end{equation}
holds for every $Q\in\cQ$.
\end{itemize}

\noindent
Then,

\begin{itemize}
\item[\text{(I)}] 
That \text{(a)} implies \text{(b)} and \text{(c)} 
with $C_2\le C C_1$;
\item[\text{(II)}] 
Those \text{(b)} and \text{(c)} imply \text{(a)} 
with 
$C_1\le C C_2^{q+1}$. 
\end{itemize}
\end{thm}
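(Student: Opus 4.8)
The plan is to establish the two implications (I) and (II) separately; (I) encodes necessity and is comparatively soft, while (II) contains essentially all the difficulty.

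\textbf{Implication (I).} The step (a)$\Rightarrow$(b) rests on the pointwise bound $M_\al f\le C\,I_\al(|f|)$: if $x\in Q$ then $|x-y|\le\sqrt n\,\ell(Q)$ for every $y\in Q$, so $I_\al(|f|)(x)\ge c\,|Q|^{\al/n-1}\int_Q|f|=c\,|Q|^{\al/n}\fint_Q|f|$, and taking the supremum over cubes containing $x$ gives the domination; substituting into (a) yields (b) with $C_2\le CC_1$. For (a)$\Rightarrow$(c) I would test (a) against data sitting far from a fixed cube $Q$. Writing $A_m=2^{m+1}Q\setminus 2^mQ$, for $x\in Q$ and $y\in A_m$ one has $|x-y|\approx 2^m\ell(Q)$, so $I_\al\1_{A_m}(x)\approx 2^{m\al}|Q|^{\al/n}$ uniformly on $Q$, and therefore $\|(I_\al\1_{A_m})w\|_{L^{q,\lm}}\gtrsim 2^{m\al}|Q|^{\al/n}\|w\1_Q\|_{L^{q,\lm}}$. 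Applying (a) bounds this by a multiple of the source norm of $\1_{A_m}w$. The structural fact that makes the argument close is that the Morrey exponent $\lm$ is the \emph{same} in the source and the target (this is precisely $1-p/p_0=1-q/q_0$); after reconciling the two scales one finds that enlarging the dilation $\kp=2^{m}$ makes the growth factor on the left exceed $2$, which is (c). I expect this to be a short but slightly delicate computation.

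\textbf{Implication (II): the global part.} Fix a cube $Q_0$; by definition of the Morrey norm it suffices to control $|Q_0|^{-\lm/(nq)}\|(I_\al f)w\|_{L^q(Q_0)}$ uniformly in $Q_0$. Split $f=f\1_{2Q_0}+f\1_{(2Q_0)^c}$ and correspondingly $I_\al f=L+G$ on $Q_0$. For the global part the crucial observation is that $G$ is essentially constant on $Q_0$: for $x\in Q_0$ and $y\notin 2Q_0$ one has $|x-y|\approx|c(Q_0)-y|$, so $G(x)\approx A:=\sum_{k\ge1}(2^k\ell(Q_0))^{\al-n}\int_{2^{k+1}Q_0}|f|\approx\sum_{k\ge1}|2^{k+1}Q_0|^{\al/n}\fint_{2^{k+1}Q_0}|f|$. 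Each summand is the quantity the necessary condition from (b) controls: testing (b) with $|f|\1_{2^{k+1}Q_0}$ gives $|2^{k+1}Q_0|^{\al/n}\big(\fint_{2^{k+1}Q_0}|f|\big)\|w\1_{2^{k+1}Q_0}\|_{L^{q,\lm}}\le C_2\|fw\|_{L^{p,\lm}}$, so the $k$-th term is at most $C_2\|fw\|_{L^{p,\lm}}/\|w\1_{2^{k+1}Q_0}\|_{L^{q,\lm}}$. Here (c) enters: iterating \eqref{1.9} shows $\|w\1_{2^{k+1}Q_0}\|_{L^{q,\lm}}$ grows geometrically in $k$, so $\sum_k\|w\1_{2^{k+1}Q_0}\|_{L^{q,\lm}}^{-1}\lesssim\|w\1_{Q_0}\|_{L^{q,\lm}}^{-1}$ and hence $A\lesssim C_2\|fw\|_{L^{p,\lm}}/\|w\1_{Q_0}\|_{L^{q,\lm}}$. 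Since $A$ is constant on $Q_0$, one gets $|Q_0|^{-\lm/(nq)}\|Gw\|_{L^q(Q_0)}\lesssim A\,\|w\1_{Q_0}\|_{L^{q,\lm}}\lesssim C_2\|fw\|_{L^{p,\lm}}$, the desired bound for the global part.

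\textbf{The local part and the main obstacle.} For $L=I_\al(f\1_{2Q_0})$ the term-by-term scheme above fails: running the same reasoning over the \emph{shrinking} cubes interior to $Q_0$ would put the weight norms in the denominator, and by \eqref{1.9} read backwards these decay geometrically, so the series diverges. Thus $L$ must be handled through the genuine smoothing of $I_\al$ rather than scale by scale. I would split the kernel at a radius and sum the (now rapidly convergent) near-field series to obtain a Welland/Hedberg-type pointwise inequality dominating $I_\al(f\1_{2Q_0})$ by $M_\al(f\1_{2Q_0})$ to an appropriate power, after which (b) applies directly on $2Q_0$. Assembling the local and global estimates and tracking how the exponents pass through the $q$-th-power Morrey normalization is what produces the stated constant $C_1\le CC_2^{q+1}$. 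The main obstacle is exactly this local estimate together with the constant bookkeeping: the tail summation via (c) is conceptually the heart of the proof but mechanically clean, whereas reducing the local fractional integral to $M_\al$ without destroying the Adams scaling $1/q_0=1/p_0-\al/n$, and extracting the precise power $q+1$, is the delicate part.
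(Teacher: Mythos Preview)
Your global estimate in (II) and the step (a)$\Rightarrow$(b) in (I) are correct and match the paper. There are, however, two genuine gaps.

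\textbf{The step (a)$\Rightarrow$(c).} Your single-annulus test does not close. With $f=\1_{A_m}$ you get $I_\al\1_{A_m}(x)\approx 2^{m\al}|Q|^{\al/n}$ on $Q$, so the left side is $\approx 2^{m\al}|Q|^{\al/n}\|w\1_Q\|_{L^{q,\lm}}$. On the right, the only way to pass from $\|w\1_{A_m}\|_{L^{p,\lm}}$ to $\|w\1_{2^{m+1}Q}\|_{L^{q,\lm}}$ is the Morrey--H\"older inequality $\|gw\|_{\cM^{p_0}_p}\le\|w\1_{2^{m+1}Q}\|_{\cM^{q_0}_q}\|g\|_{\cM^{n/\al}_r}$ with $1/p=1/q+1/r$ and $1/p_0=1/q_0+\al/n$; since $\|\1_{A_m}\|_{\cM^{n/\al}_r}\approx 2^{m\al}|Q|^{\al/n}$, the two factors of $2^{m\al}$ cancel exactly and you obtain only $\|w\1_Q\|_{L^{q,\lm}}\le CC_1\|w\1_{2^{m+1}Q}\|_{L^{q,\lm}}$ with a constant independent of $m$, which is vacuous. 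The paper instead tests with
\[
f_m(y)=\frac{\1_{mQ_m\setminus 2Q_m}(y)}{|y-c(Q_m)|^{\al}},
\]
for which $I_\al f_m\gtrsim\log m$ on $Q_m$ while $\|f_m\|_{\cM^{n/\al}_r}\le\|f_m\|_{L^{n/\al}}\approx(\log m)^{\al/n}$. The mismatch $(\log m)^{1-\al/n}\to\infty$ then contradicts the negation of \eqref{1.9}. The logarithmic blow-up is essential; a single annulus cannot produce it.

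\textbf{The local part in (II).} A Hedberg/Welland pointwise reduction to ``$M_\al$ to an appropriate power'' does not exist in the form you need. By scaling, any inequality $I_\al g\le C(M_\al g)^\theta$ forces $\theta=1$, which is the false direction; the genuine pointwise inequalities are either $I_\al g\le C(M_{\al-\eps}g\cdot M_{\al+\eps}g)^{1/2}$ (Welland, which would require (b) for $M_{\al\pm\eps}$ rather than $M_\al$) or $I_\al g\le C(Mg)^{p_0/q_0}\|g\|^{1-p_0/q_0}$ (Adams/Hedberg, where the norm factor is unweighted and does not match $\|fw\|_{L^{p,\lm}}$). The paper avoids this entirely: it proves a sparse domination $I_\al f\le C\,I_\al^{\cS}f+C_\infty$ on $Q_0$ with $\cS\subset\cD(Q_0)$ half-sparse, then estimates $\|I_\al^{\cS}f\|_{L^q(u)}$ by duality against $g\in L^{q'}(u)$ with $u=w^q$. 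The key structural input extracted from (b) is that testing $M_\al$ on $w^{-1}\1_Q$ yields
\[
\Big(\fint_Q u\Big)\Big(\fint_Q w^{-1}\Big)^q\le C_2^q,
\]
i.e.\ $u\in A_{q+1}$ with $[u]_{A_{q+1}}\le C_2^q$; this gives $u(Q)\le CC_2^q u(E_Q)$ on the sparse family, after which H\"older and the universal $L^{q'}(u)$-bound for the dyadic $u$-weighted maximal operator reduce the sparse sum to $\big(\int_{Q_0}(M_\al f)^q u\big)^{1/q}$. One more application of (b) gives the final factor $C_2$, and the constant $C_2^{q+1}$ comes out of this $A_{q+1}$ mechanism, not from any pointwise Hedberg bound.
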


Theorem \ref{thm1.5} implies that the boundedness of 
$I_{\al}$ is equivalent to 
the one of $M_{\al}$ and 
the additional condition \eqref{1.9}. 
Note that the additional condition \eqref{1.9} was introduced in 
\cite{NS15} as the weighted integral condition to ensure 
the boundedness of the singular integral operator on weighted Morrey spaces.

%%%%
It is well known that Muckenhoupt introduced
the class of weight $A_p$ in his paper
\cite{Mu,MuWh}.
In fact Muckenhoupt was successfull
in characterizing the condition
for $M$ to be bounded
on $L^p(w)$.
In establishing the theory of weights for 
Lebesgue spaces,
it is difficult to obtain the strong $A_p$
estimates.
Muckenhoupt established the strong weight theory
in \cite[Section 4]{Mu} for $n=1$
and
Coifman and Fefferman considered
the higher dimensional case
\cite{CoFe74}.
We can say that the key tool
is the Calder\''{o}n-Zygmund decomposition.
The Calder\''{o}n-Zygmund decomposition is skillfully used
to solve the $A_2$ conjecture
\cite{Hy3}
and develop a modern weighted theory
\cite{HPR12,HyPe13}.
However, it seems that the Calder\'{o}n-Zygmund theory
is not enough
when we prove the boundedness of the operators
on Morrey spaces.
A standard technique
to prove the boundedness of the operators
on Morrey spaces
is to fix a cube $Q$,
as is seen from the definition
(\ref{Morrey}).
Accordingly, when we are given a function
$f$, we decompose it according to $3Q$.
Let $f_1=f\chi_{3Q}$ and $f_2=f-f_1$.
Then we can benefit a lot
from the Calder\'{o}n-Zygmund theory
for the function $f_1$.
However, it seems that some different approaches
are necessary for $f_2$.
In this paper,
we applied this strategy
in the proof of Lemmma \ref{lem:5.2}.
See
(\ref{5.4})
and
(\ref{5.6})
for the estimates
for $f_1$ and $f_2$,
respectively,
where
a special tool
(\ref{5.5})
is necessary 
for $f_2$
in order to do without the Calder\'{o}n-Zygmund decomposition.

One of the striking achievement
in the theory of weighted Lebesgue spaces
is that the classes
$\{A_p\}_{p>1}$ enjoys the openness property.
Originally,
Muckenhoupt used
to show that the strong boundedness
on $L^p(w)$ is equivalent to $w \in A_p$
\cite{Mu}.
From the definition of $A_p$,
we can show that $M$ is weak bounded on $L^p(w)$
using the covering lemma.
It is not so hard to show
that
the strong boundedness
on $L^p(w)$ implies $w \in A_p$.
We follow the same line in 
our proof of 
(\ref{1.6}) based on the strong boundedness
of Morrey spaces in Theorem \ref{thm1.4}.
However,
even in the case of Lebesgue spaces,
it \mbox{\it was} hard to show that $w \in A_p$ implies
the strong boundedness
on $L^p(w)$.
In fact, the proof hinged upon the openness
property asserting that $w \in A_{q}$
for some $1<q<p$.
Since $M$ is weak bounded on $L^q(w)$
and bound on $L^\8$ trivially,
we see that $M$ is bounded on $L^p(w)$.
When we want to run this program,
we are faced with the problem of showing
the weak boundedness on weighted Morrey spaces
although we still have some openness property, see
\cite{LO};
once again, the Calder\'{o}n-Zygmund decomposition
is not enough.
We remark that the results
in \cite{LO} are available in
weighted Morrey spaces
by reexamining the proof.

Although the openness property
seems to have been essential in early 80's,
it turned out that  we can prove the 
$L^p(w)$ boundedness of $M$
without using the openness property
\cite{HKN,Jawerth,Lerner}.
Among others,
Lerner used a universal estimate 
(\ref{5.2})
for the weighted dyadic Hardy-Littlewood
maximal operator.
His main idea is to convert
the Hardy-Littlewood maximal operator
adapted to the weighted Lebesgue space $L^p(w)$
\cite[p. 2831]{Lerner}.
Although we still have a counterpart
to weighted Morrey spaces
of the universal estimate Lerner used,
the gap exists
between the condition
(\ref{1.6}) and
the universal estimate we obtain,
see Lemma \ref{lem:5.2}.

Another barrier for us to study Morrey spaces
is that Morrey spaces are not rearrangement invariant
as is seen from the example in \cite[Proposition 4.1]{SST11}.
In fact, another example shows that
the Morrey space $L^{p,\lambda}({\mathbb R}^n)$
with $1<\lambda<n$
is not embedded 
into $L^1({\mathbb R}^n)+L^\infty({\mathbb R}^n)$,
see \cite[Section 6]{HS}.
This fact prevents us from using
the theory developed in \cite[Theorem 2.4]{LePe07}.
Since Morrey spaces are not rearrangement invariant,
it is convenient for us to use the decreasing rearrangement.

We can locate the function space
$H^{p,\lambda}({\mathbb R}^n)$
as a new tool to overcome these problems.
%%%%

Here and below,
the letter $C$ will be used for constants 
that may change from one occurrence to another. 
Constants with subscripts, such as $C_1$, $C_2$, do not change 
in different occurrences. 
By $A\approx B$ we mean that 
$c^{-1}B\le A\le cB$ 
with some positive constant $c$ independent of appropriate quantities.

\section{Proof of Theorem \ref{thm1.4}}\label{sec2}
In what follows 
we shall prove Theorem \ref{thm1.4}. 
We need three lemmas 
(cf. \cite{Ta} for the first lemma).

\begin{lem}\label{lem2.1}
Let $1<p<p_0<\8$ and 
set $\lm/n=1-p/p_0$. 
Then, 
for any measurable function $g$ on $\R^n$,
we have the estimate $($allowing to be infinite$)$ 
\[
\|g\|_{H^{p',\lm}}
\approx
\sup_{f}\int_{R^n}|fg|\,dx,
\]
where the supremum is taken over all functions 
$f\in L^{p,\lm}(\R^n)$ with unit norm.
\end{lem}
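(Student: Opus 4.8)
The statement is a Köthe-type duality identity between the Morrey space $L^{p,\lm}$ and the space $H^{p',\lm}$, so the natural plan is to prove the two inequalities hidden in the symbol $\approx$ separately. The easy inequality $\sup_f\int_{\R^n}|fg|\,dx\le C\|g\|_{H^{p',\lm}}$ is essentially free: applying the Morrey Hölder inequality \eqref{1.5} to $|f|$ and $|g|$ (which have the same norms as $f$ and $g$) gives $\int_{\R^n}|fg|\,dx\le C\|f\|_{L^{p,\lm}}\|g\|_{H^{p',\lm}}$ for every $f$, and taking the supremum over the unit ball of $L^{p,\lm}$ finishes this half with no extra work.

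The substantive half is the reverse bound $\|g\|_{H^{p',\lm}}\le C\sup_f\int_{\R^n}|fg|\,dx$. I would reduce it to a minimax identity. Assume $g\ge 0$. By the change of variable $u=fb^{1/p}$ together with ordinary $L^p$--$L^{p'}$ duality, one has, for each fixed $b\in\cB_{\lm}$, the exact evaluation $\sup_{f\ge 0}\Psi(f,b)=\lt(\int_{\R^n}g^{p'}b^{1-p'}\,dx\rt)^{1/p'}$, where
\[
\|g\|_{H^{p',\lm}}=\inf_{b\in\cB_{\lm}}\ \sup_{f\ge 0}\ \Psi(f,b),
\qquad
\Psi(f,b):=\frac{\int_{\R^n}fg\,dx}{\lt(\int_{\R^n}f^p b\,dx\rt)^{1/p}}.
\]
On the other hand, since $\int_{\R^n}fg\,dx$ does not depend on $b$, the characterization \eqref{1.4} of the Morrey norm rewrites the dual quantity as
\[
\sup_{f\ge 0}\frac{\int_{\R^n}fg\,dx}{\|f\|_{L^{p,\lm}}}
\ \approx\
\sup_{f\ge 0}\ \inf_{b\in\cB_{\lm}}\ \Psi(f,b).
\]
Thus the reverse inequality is exactly the assertion that the value of the game $\Psi$ is unchanged, up to the equivalence constant in \eqref{1.4}, when $\sup_f$ and $\inf_b$ are interchanged; the trivial inequality $\sup\inf\le\inf\sup$ reproduces the easy half and is consistent with \eqref{1.5}.

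To justify the interchange I would verify the hypotheses of a Sion-type minimax theorem. The convexity/concavity structure is the clean part. The family $\cB_{\lm}$ is convex: convex combinations of $A_1$ weights are again in $A_1$, and the Choquet integral against $H^{\lm}$ is subadditive, so the constraint $\int_{\R^n}b\,dH^{\lm}\le 1$ is preserved. For fixed $f$ the map $b\mapsto\lt(\int_{\R^n}f^pb\,dx\rt)^{1/p}$ is concave (the integral is linear in $b$), hence $b\mapsto\Psi(f,b)$ is convex; for fixed $b$ the super-level set
\[
\{f\ge 0:\ \Psi(f,b)\ge t\}
=
\lt\{f\ge 0:\ \int_{\R^n}fg\,dx-t\,\lt(\int_{\R^n}f^pb\,dx\rt)^{1/p}\ge 0\rt\}
\]
is the nonnegativity set of a concave functional and is therefore convex, so $\Psi(\cdot,b)$ is quasi-concave.

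The main obstacle is the compactness-plus-semicontinuity requirement needed to run the minimax theorem, and this is where the specific structure of $\cB_{\lm}$ bites: the family carries no uniform $A_1$ bound, so a weak limit of admissible weights need not remain in $\cB_{\lm}$, and the unit sphere of the (non-reflexive) Morrey space is not weakly compact either. I expect to resolve this not by abstract weak compactness but by an explicit construction of a near-optimal weight, in the spirit of a Rubio de Francia / Gagliardo iteration: starting from the constraint $\int_{\R^n}fg\,dx\le A\,\|f\|_{L^{p,\lm}}$ one builds, via the boundedness of the Hardy--Littlewood maximal operator, an $A_1$ weight $b$ with $A_1$-constant controlled only by $p,p_0,n$ and with $\int_{\R^n}b\,dH^{\lm}\le 1$ (using \eqref{1.4} to keep the Choquet integral under control), for which $\lt(\int_{\R^n}g^{p'}b^{1-p'}\,dx\rt)^{1/p'}\lesssim A$. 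Tracking the equivalence constant of \eqref{1.4} through this construction then yields the implicit constant in the claimed $\approx$ and completes the harder inequality.
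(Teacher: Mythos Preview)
The paper does not give its own proof of this lemma but defers to \cite{Ta}, so there is no in-paper argument to compare against. Your easy direction via \eqref{1.5} is correct and is what any proof would do, and your minimax reformulation of the hard direction is accurate: writing $\|g\|_{H^{p',\lm}}=\inf_{b}\sup_{f}\Psi(f,b)$ and the dual quantity as (up to the constant in \eqref{1.4}) $\sup_{f}\inf_{b}\Psi(f,b)$ is the right picture, and the convexity and quasi-concavity checks you give are sound.

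The genuine gap is that you never construct the near-optimal weight $b\in\cB_{\lm}$. You correctly diagnose that Sion's theorem does not apply---neither side is compact and $\cB_{\lm}$ carries no uniform $A_1$ bound---and you then gesture at a Rubio de Francia iteration as a substitute, but you do not specify the seed function to which the iteration is applied, nor how the Choquet constraint $\int_{\R^n}b\,dH^{\lm}\le 1$ survives the iteration. This is not a minor omission: the standard Rubio de Francia operator $Rh=\sum_{k\ge 0}(2\|M\|)^{-k}M^{k}h$ controls $[Rh]_{A_1}$ but gives no handle on $\int_{\R^n}Rh\,dH^{\lm}$, and your appeal to \eqref{1.4} ``to keep the Choquet integral under control'' runs in the wrong direction (that identity bounds $\int|f|^{p}b$ by the Morrey norm of $f$, not the Hausdorff integral of $b$ by anything). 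The argument in \cite{Ta} does not go through an abstract minimax; it identifies $H^{p',\lm}$ with the block-space predual of $L^{p,\lm}$. Once one knows that any $g$ with finite dual norm decomposes as $\sum_{j}c_{j}a_{j}$ with $(p',\lm)$-blocks $a_{j}$ supported on cubes $Q_{j}$, the witnessing weight is built block by block---for a single block on $Q$ one takes essentially $b\approx\ell(Q)^{-\lm}(M\1_{Q})^{\dl}$ for suitable $0<\dl<1$, which is $A_1$ by Coifman--Rochberg and has $H^{\lm}$-integral $\approx 1$---and then the pieces are combined. That explicit, cube-by-cube construction of $b$ is the missing idea in your outline.
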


\begin{lem}\label{lem2.2}
Let $w\in A_p$, $p\ge 1$, and $Q\in\cQ$. 
Then, 
for any measurable set $S\subset Q$, 
\[
\lt(\frac{|S|}{|Q|}\rt)^pw(Q)
\le C
[w]_{A_p}w(S).
\]
\end{lem}

\begin{proof}
Using the well-known fact that 
\[
\sup_{t>0}
t^pw(\{x:\,Mf(x)\ge t\})
\le C
[w]_{A_p}\|f\|_{L^p(w)}^p,
\]
we have that 
\[
w(Q)
\le
w(\{x:\,M[\1_{S}](x)\ge|S|/|Q|\})
\le C
[w]_{A_p}
(|S|/|Q|)^{-p}w(S),
\]
which proves the lemma.
\end{proof}

To describe the third lemma,
we need terminology.
We say that a family $\cS$ of cubes from $\R^n$ is $\eta$ sparse, 
$0<\eta<1$, 
if for every $Q\in\cS$, 
there exists a measurable set 
$E_{Q}\subset Q$ such that 
$|E_{Q}|\ge\eta|Q|$, and 
the sets $\{E_{Q}\}_{Q\in\cS}$ 
are pairwise disjoint. 
Given a cube $Q_0\in\cQ$, 
let $\cD(Q_0)$ denote the set of all dyadic cubes 
with respect to $Q_0$, that is, 
the cubes obtained by repeated subdivision of $Q_0$ 
and each of its descendants into $2^n$ congruent subcubes. 
By convention $Q_0$ itself belongs to $\cD(Q_0)$. 

\begin{lem}\label{lem2.3}
Let $0\le\al<n$. 
Suppose that 
the non-negative and bounded
function $f$ has compact support. 
Then, for any cube $Q_0\in\cQ$, 
there exists a $1/2$ sparse family 
$\cS\subset\cD(Q_0)$ 
such that, for all $x\in Q_0$, 
\[
M_{\al}f(x)
\le C
L_{\al}^{\cS}f(x)
+
c_{\8},
\]
where 
\[
L_{\al}^{\cS}f(x)
=
\sum_{Q\in\cS}
\1_{E_{Q}}(x)
|Q|^{\al/n}\fint_{3Q}f\,dy
\]
and 
\[
c_{\8}
=
\sup_{\substack{Q\in\cQ \\ Q\supset Q_0}}
|Q|^{\al/n}\fint_{Q}f\,dx.
\]
\end{lem}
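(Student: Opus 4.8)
The plan is to obtain the pointwise bound in two stages: first replace the continuous operator $M_\al$ by a dyadic ``tripled'' maximal operator adapted to the grid $\cD(Q_0)$, plus the tail $c_\8$; then dominate that dyadic operator by the sparse sum via a Calder\'on--Zygmund stopping-time tuned to the fractional averaging functional $h(P):=|P|^{\al/n}\fint_{3P}f$.

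\emph{Stage 1 (reduction to a dyadic operator).} Fix $x\in Q_0$ and a cube $R\in\cQ$ with $x\in R$. If $\ell(R)>\ell(Q_0)$, then since $x\in Q_0\cap R$ one checks $Q_0\subset 3R$, whence $|R|^{\al/n}\fint_R f\le 3^{n-\al}|3R|^{\al/n}\fint_{3R}f\le 3^{n-\al}c_\8$, so this class of cubes is absorbed into the tail (the harmless constant being absorbed as usual). If $\ell(R)\le\ell(Q_0)$, let $P\in\cD(Q_0)$ be the dyadic cube with $x\in P$ and $\ell(R)\le\ell(P)<2\ell(R)$; a short computation gives $R\subset 3P$ and $|P|\approx|R|$, so $|R|^{\al/n}\fint_R f\le C\,h(P)$. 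Taking suprema reduces everything to proving, for $x\in Q_0$,
\[
\sup_{\substack{P\in\cD(Q_0)\\ x\in P}}h(P)\le C\,L_\al^{\cS}f(x).
\]

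\emph{Stage 2 (stopping time and domination).} Fixing a large dimensional constant $a$, I would build $\cS$ generationally: put $Q_0\in\cS$, and for $Q\in\cS$ take its $\cS$-children to be the maximal $Q'\in\cD(Q_0)$, $Q'\subsetneq Q$, with $h(Q')>a\,h(Q)$, and set $E_Q=Q\setminus\bigcup\{Q':Q'\text{ an }\cS\text{-child of }Q\}$. The key design choice is to stop on the \emph{full} functional $h$ rather than on the average alone, so that the non-monotone factor $|P|^{\al/n}$ is tamed and $h$ grows geometrically along every stopping chain. Since $f$ is bounded with compact support, these chains are finite, so for every $x\in Q_0$ there is a unique stopping cube $Q_x$ with $x\in E_{Q_x}$, and $L_\al^{\cS}f(x)=h(Q_x)$. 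For $P\ni x$ with $P\subseteq Q_x$, membership $x\in E_{Q_x}$ forces $h(P)\le a\,h(Q_x)$ (otherwise $P$ would sit inside an $\cS$-child of $Q_x$ containing $x$). For $P\supsetneq Q_x$, write the chain $Q_0=R_0\supsetneq\cdots\supsetneq R_m=Q_x$; then $R_i\supseteq P\supsetneq R_{i+1}$ for some $i$, maximality gives $h(P)\le a\,h(R_i)$, and telescoping $h(R_{j+1})>a\,h(R_j)$ yields $h(R_i)\le a^{-(m-i)}h(Q_x)$, so again $h(P)\le a\,h(Q_x)$. Thus $\sup_{P\ni x}h(P)\le a\,h(Q_x)$, exactly what Stage 1 needs.

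\emph{The main obstacle (sparseness).} It remains to show $\bigl|\bigcup\{\cS\text{-children of }Q\}\bigr|\le\tfrac12|Q|$. Because $|Q'|^{\al/n}\le|Q|^{\al/n}$, every child satisfies $\fint_{3Q'}f>a\fint_{3Q}f$, so the union lies in $\{x\in Q:\sup_{P\subsetneq Q,\,x\in P}\fint_{3P}f>a\fint_{3Q}f\}$. Here the hard point is the dilation $3P$: triples of \emph{disjoint} dyadic cubes can overlap unboundedly, so the naive packing estimate fails, and this is the one place where the tripling genuinely bites. I would bypass it entirely through the pointwise bound $\sup_{P\ni x}\fint_{3P}f\le M^{\square}f(x)$, where $M^{\square}$ is the uncentred Hardy--Littlewood maximal operator over all cubes (each $3P$ being a cube containing $x$). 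Applying the weak-type $(1,1)$ inequality for $M^{\square}$ to $f\1_{3Q}$ then gives a bound of the form $\frac{C_n}{a\fint_{3Q}f}\int_{3Q}f=\frac{C_n3^n}{a}|Q|$, and the choice $a=2C_n3^n$ delivers the $1/2$-sparseness, completing the argument.
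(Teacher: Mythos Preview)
Your proof is correct and follows essentially the same approach as the paper: reduce $M_\al$ on $Q_0$ to the dyadic tripled maximal operator $\sup_{P\in\cD(Q_0),\,P\ni x}h(P)$ plus the tail $c_\8$, run a Calder\'on--Zygmund stopping time on the functional $h(P)=|P|^{\al/n}\fint_{3P}f$, and obtain $1/2$-sparseness via the weak-$(1,1)$ bound for the Hardy--Littlewood maximal operator applied to $f\1_{3Q}$. The only cosmetic difference is that the paper uses absolute thresholds $a_0a^k$ (with $a=9^n2^{n+1-\al}$) for the level sets $D_k$, whereas you stop relatively with $h(Q')>a\,h(Q)$; the resulting families and estimates are the same in spirit, and both domination and sparseness go through by the identical mechanism.
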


\begin{proof}
Fix $Q_0\in\cQ$.
We write 
\[
\wt{M}_{\al}f(x)
=
\sup_{Q\in\cD(Q_0)}
\1_{Q}(x)
|Q|^{\al/n}\fint_{3Q}f\,dy.
\]
It is easy see that, 
for all $x\in Q_0$, 
\[
M_{\al}f(x)
\le C
\wt{M}_{\al}f(x)
+
c_{\8}.
\]
Let 
$a_0=|Q_0|^{\al/n}\fint_{3Q_0}f\,dx$ 
and 
$a=9^n2^{n+1-\al}$. 
For each $k=0,1,2,\ldots$, 
define 
\[
D_k
=
\bigcup\lt\{
Q\in\cD(Q_0):\,
|Q|^{\al/n}\fint_{3Q}f\,dx
\ge
a_0a^k
\rt\}
(\subset Q_0).
\]
Considering the maximal cubes with respect to inclusion,
we can write 
\[
D_k=\bigcup_jQ_j^k,
\]
where the cubes $\{Q_j^k\}$ are pairwise disjoint.
By the maximality of $Q_j^k$, 
we see that 
\begin{equation}\label{2.1}
a_0a^k
\le
|Q_j^k|^{\al/n}\fint_{3Q_j^k}f\,dx
\le 2^{n-\al}
a_0a^k.
\end{equation}
We shall verify that 
the family $\cS=\{Q_j^k\}$
is $1/2$ sparse. 
To this end, we let 
\[
E_{Q_j^k}
=
Q_j^k\setminus D_{k+1},
\]
then we see that 
the sets $\{E_{Q_j^k}\}$ 
are pairwise disjoint and 
decompose $Q_0$. 
So, we need only verify that 
\begin{equation}\label{2.2}
|E_{Q_j^k}|
\ge
\frac12|Q_j^k|.
\end{equation}
Notice that, 
if $Q_i^{k+1}\subset Q_j^k$, 
then by \eqref{2.1} 
\begin{align*}
a_0a^{k+1}
&\le
|Q_i^{k+1}|^{\al/n}
\fint_{3Q_i^{k+1}}f\,dx
<
|Q_j^k|^{\al/n}
\fint_{3Q_i^{k+1}}f\,dx
\\ &\le
|Q_j^k|^{\al/n}
M[f\1_{3Q_j^k}](x)
\text{ for all }
x\in Q_i^{k+1}.
\end{align*}
This entails 
\[
Q_j^k\cap D_{k+1}
\subset
\lt\{x\in\R^n:\,
M[f\1_{3Q_j^k}](x)
\ge
\frac{a_0a^{k+1}}{|Q_j^k|^{\al/n}}
\rt\}.
\]
The weak-$(1,1)$ boundedness of $M$ 
together with \eqref{2.1} yields 
\begin{align*}
|Q_j^k\cap D_{k+1}|
&\le
3^n
\cdot
\frac{|Q_j^k|^{\al/n}}{a_0a^{k+1}}
\cdot
\int_{3Q_j^k}f
\le
3^n
\cdot
2^{n-\al}a_0a^k
\cdot
\frac{|Q_j^k|^{\al/n}}{a_0a^{k+1}}
\cdot
\frac{3^n|Q_j^k|}{|Q_j^k|^{\al/n}}
\\ &=
\frac{9^n2^{n-\al}}{a}
|Q_j^k|
=
\frac12|Q_j^k|,
\end{align*}
which implies \eqref{2.2}.

Finally, 
for each $Q=Q_j^k\in\cS$ 
and any $x\in E_{Q}$, 
we have by \eqref{2.1} that 
\[
\wt{M}_{\al}f(x)
\le
a_0a^{k+1}
\le a
|Q|^{\al/n}\fint_{3Q}f\,dy.
\]
Since 
the sets $\{E_{Q}\}_{Q\in\cS}$
are pairwise disjoint and 
decompose $Q_0$, 
we conclude that 
\[
\wt{M}_{\al}f(x)
\le C
L_{\al}^{\cS}f(x)
\text{ for all }
x\in Q_0.
\]
This completes the proof. 
\end{proof}

\subsection{Proof of Theorem \ref{thm1.4} (I)}\label{ssec2.1}
Assume the statement \text{(a)}. 
Then the inequality 
\[
\|(M_{\al}f)w\|_{L^{q,\lm}}
\le C_1
\|fw\|_{L^{p,\lm}}
\]
holds for every function $f$ with 
$fw\in L^{p,\lm}(\R^n)$.
For any cube $Q\in\cQ$ and 
any function $f$ with 
$fw\in L^{p,\lm}(\R^n)$, 
\[
|Q|^{\al/n-1}
\int_{Q}|f|\,dx
\times
\|w\1_{Q}\|_{L^{q,\lm}}
\le
\|M_{\al}[f\1_{Q}]w\|_{L^{q,\lm}}
\le C_1
\|fw\1_{Q}\|_{L^{p,\lm}}.
\]
Taking the supremum over all functions $f$ 
with 
$\|fw1_{Q}\|_{L^{p,\lm}}\le 1$, 
we have by Lemma \ref{lem2.1} 
\[
|Q|^{\al/n-1}
\|w\1_{Q}\|_{L^{q,\lm}}
\|w^{-1}\1_{Q}\|_{H^{p',\lm}}
\le C C_1,
\]
which is the statement {\text(b)}.

\subsection{Proof of Theorem \ref{thm1.4} (II)}\label{ssec2.2}
To prove sufficiency we may assume that 
the function $f$ is non-negative and bounded and 
that $f$ has compact support. 
Fix $Q_0\in\cQ$. 
We have to evaluate the quantity 
\[
\lt(\frac1{|Q_0|^{\lm/n}}\int_{Q_0}[(M_{\al}f)w]^q\,dx\rt)^{1/q}.
\]
By Lemma \ref{lem2.3} 
we can select a $1/2$ sparse family 
$\cS\subset\cD(Q_0)$ 
such that, for all $x\in Q_0$, 
\[
M_{\al}f(x)
\le C
L_{\al}^{\cS}f(x)
+
c_{\8},
\]
where 
\[
L_{\al}^{\cS}f(x)
=
\sum_{Q\in\cS}
\1_{E_{Q}}(x)|Q|^{\al/n}\fint_{3Q}f\,dy
\]
and 
\[
c_{\8}
=
\sup_{\substack{Q\in\cQ \\ Q\supset Q_0}}
|Q|^{\al/n}\fint_{Q}f\,dx.
\]

We first estimate 
\[
\text{(i)}
=
c_{\8}
\lt(\frac1{|Q_0|^{\lm/n}}\int_{Q_0}w^q\,dx\rt)^{1/q}.
\]
For any cube $Q\supset Q_0$, 
\begin{align*}
\lefteqn{
|Q|^{\al/n}\fint_{Q}f\,dx
\lt(\frac1{|Q_0|^{\lm/n}}\int_{Q_0}w^q\,dx\rt)^{1/q}
}\\ &\le
|Q|^{\al/n-1}
\|w\1_{Q}\|_{L^{q,\lm}}
\int_{Q}w^{-1}\cdot fw\,dx
\\ &\le C
|Q|^{\al/n-1}
\|w\1_{Q}\|_{L^{q,\lm}}
\|w^{-1}\1_{Q}\|_{H^{p',\lm}}
\|fw\|_{L^{p,\lm}}
\\ &\le C C_2
\|fw\|_{L^{p,\lm}},
\end{align*}
where we have used 
H\"{o}lder's inequality \eqref{1.5} 
and \eqref{1.6}. 
This implies, 
since the cube $Q\supset Q_0$ is arbitrary, 
\[
\text{(i)}
\le C C_2
\|fw\|_{L^{p,\lm}}.
\]

We next estimate 
\[
\text{(ii)}
=
\int_{Q_0}[(L_{\al}^{\cS}f)w]^q\,dx.
\]
Take $b=b_{9Q_0}\in\cB_{\lm}$ 
satisfying 
\eqref{1.7} and \eqref{1.8} 
(replacing $Q_0$ with $9Q_0$). 
Set $u=w^q$ and 
$\sg=[wb^{1/p}]^{-p'}$. 
Since the sets $E_{Q}$, 
$Q\in\cS$, 
are pairwise disjoint, we have that 
\[
\text{(ii)}
=3^{-nq}
\sum_{Q\in\cS}
\lt(|Q|^{\al/n-1}\int_{3Q}f\,dx\rt)^q
u(E_{Q}).
\]

We recall the following\text{:}
Since
\[
\frac{\int_{3Q}f\,dx}{\sg(9Q)}
=
\frac{\int_{3Q}f\sg^{-1}\,d\sg}{\sg(9Q)}
\le
\inf_{y\in Q}M_{\sg}^c[f\sg^{-1}](y),
\]
where $M_{\sg}^c$ is 
the centered weighted Hardy-Littlewood maximal operator 
with respect to $\sg$, 
we obtain 
\begin{align*}
\sum_{Q\in\cS}
\lt(\frac{\int_{3Q}f}{\sg(9Q)}\rt)^p
\sg(E_{Q})
&\le
\sum_{Q\in\cS}
\int_{E_{Q}}
M_{\sg}^c[f\sg^{-1}]^p
\,d\sg
\\ &\le
\|M_{\sg}^c[f\sg^{-1}]\|_{L^p(\sg)}^p
\le 
\left(\frac{p}{p-1}\right)^p
\|f\sg^{-1}\|_{L^p(\sg)}^p
\\ &=
\left(\frac{p}{p-1}\right)^p
\|fwb^{1/p}\|_{L^p}^p
\le C
\|fw\|_{L^{p,\lm}}^p,
\end{align*}
where we have used \eqref{1.4} and 
the well-known fact that 
$M_{\sg}^c$ is bounded on $L^p(\sg)$. 

With this in mind, 
we shall estimate the quantity 
\[
\text{(iii)}
=
\lt(|Q|^{\al/n-1}\int_{3Q}f\,dx\rt)^q
\frac{u(E_{Q})}{\sg(E_{Q})}.
\]
To this end, we first define
\[
X=\text{(iii)}\cdot\sg(E_{Q})
\text{ and }
Y=\text{(iii)}\cdot\sg(E_{Q})^{1-q/p}.
\]
Then an arithmetic shows that
\[
\text{(iii)}
=
X^{1-p/q}Y^{p/q}.
\]

It follows that 
\begin{align*}
X
&=
u(Q)
\lt(|Q|^{\al/n-1}\int_{3Q}f\,dx\rt)^q
\\ &=
|Q|^{\lm/n}
\lt(
|Q|^{\al/n-1}
\lt(\frac{u(Q)}{|Q|^{\lm/n}}\rt)^{1/q}
\int_{3Q}f\,dx
\rt)^q
\\ &\le C
|Q|^{\lm/n}
\lt(
|Q|^{\al/n-1}
\|w\1_{Q}\|_{L^{q,\lm}}
\|w^{-1}\1_{3Q}\|_{H^{p',\lm}}
\cdot
\|fw\1_{3Q}\|_{L^{p,\lm}}
\rt)^q
\\ &\le C
|Q|^{\lm/n}
\lt(
C_2\|fw\|_{L^{p,\lm}}
\rt)^q,
\end{align*}
where we have used 
H\"{o}lder's inequality \eqref{1.5} 
and our assumption \eqref{1.6}. 

It follows also that 
\begin{align*}
Y
&=
\sg(E_{Q})^{-q/p}
\lt(
|Q|^{\al/n-1}
u(Q)^{1/q}\int_{3Q}f\,dx
\rt)^q
\\ &=
\sg(E_{Q})^{-q/p}
\lt(
|Q|^{\al/n-1}
u(Q)^{1/q}\sg(9Q)
\frac{\int_{3Q}f\,dx}{\sg(9Q)}
\rt)^q
\\ &\le
\lt\{
\lt(\frac{\sg(9Q)}{\sg(E_{Q})}\rt)^{1/p}
\cdot
|Q|^{\al/n-1}
u(9Q)^{1/q}\sg(9Q)^{1/p'}
\cdot
\frac{\int_{3Q}f\,dx}{\sg(9Q)}
\rt\}^q.
\end{align*}

By \eqref{1.8} together with 
$|9Q|=9^n|Q|\le 2\cdot 9^n|E_{Q}|$,
Lemma \ref{lem2.2} gives 
\[
\lt(\frac{\sg(9Q)}{\sg(E_{Q})}\rt)^{1/p}
\le C
C_3^{1/p}.
\]

Meanwhile,
an arithmetic shows that
\begin{align*}
\lefteqn{
(|Q|^{\lm/n})^{1-p/q}
\cdot
\lt(
|Q|^{\al/n-1}
u(9Q)^{1/q}\sg(9Q)^{1/p'}
\rt)^p
}\\ &=
\lt(
|Q|^{(\lm/n)(1/p-1/q)}
|Q|^{\al/n-1}
u(9Q)^{1/q}\sg(9Q)^{1/p'}
\rt)^p
\\ &\qquad\text{ by using }
\frac{\lm}n=1-\frac{p}{p_0}=1-\frac{q}{q_0}
\\ &=
\lt(
|Q|^{(1/p-1/p_0)+(1/q_0-1/q)}
|Q|^{\al/n-1}
u(9Q)^{1/q}\sg(9Q)^{1/p'}
\rt)^p
\\ &\qquad\text{ by using }
\frac1{q_0}-\frac1{p_0}+\frac{\al}n=0
\\ &=
\lt(
|Q|^{-1/q-1/p'}
u(9Q)^{1/q}\sg(9Q)^{1/p'}
\rt)^p
\\ &=C
\lt\{
\lt(\frac{u(9Q)}{|9Q|}\rt)^{1/q}
\lt(\frac{\sg(9Q)}{|9Q|}\rt)^{1/p'}
\rt\}^p
\le C C_3^p
\ell(Q_0)^{\lm},
\end{align*}
where we have used \eqref{1.7}
for the last inequality. 

Altogether, 
\begin{align*}
\ell(Q_0)^{-\lm}
\cdot
\text{(ii)}
&\le C
C_2^{q-p}
C_3^{p+1}
\|fw\|_{L^{p,\lm}}^{q-p}
\sum_{Q\in\cS}
\lt(\frac{\int_{3Q}f}{\sg(9Q)}\rt)^p
\sg(E_{Q})
\\ &\le C
C_2^{q-p}
C_3^{p+1}
\|fw\|_{L^{p,\lm}}^q.
\end{align*}
This proves sufficiency. 

\section{Proof of Theorem \ref{thm1.5}}\label{sec3}
In what follows 
we shall prove Theorem \ref{thm1.5}. 
We need a lemma which is similar to 
Lemma \ref{2.2}. 

\begin{lem}\label{lem3.1}
Let $0<\al<n$ and 
$\kp>1$. 
Suppose that 
the function $f$ is non-negative and bounded and 
that $f$ has compact support. 
Then, for any cube $Q_0\in\cQ$, 
there exists a $1/2$ sparse family 
$\cS\subset\cD(Q_0)$ 
such that, for all $x\in Q_0$, 
\[
I_{\al}f(x)
\le C
\lt(
I_{\al}^{\cS}f(x)
+
C_{\8}
\rt),
\]
where 
\[
I_{\al}^{\cS}f(x)
=
\sum_{Q\in\cS}
\1_{Q}(x)
|Q|^{\al/n}\fint_{3Q}f\,dy
\]
and 
\[
C_{\8}
=
\sum_{k=0}^{\8}
|\kp^kQ_0|^{\al/n}\fint_{\kp^kQ_0}f\,dx.
\]
\end{lem}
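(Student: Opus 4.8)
The plan is to imitate the strategy of Lemma \ref{lem2.3}, splitting $I_\al f(x)$ for $x\in Q_0$ into a near contribution from $3Q_0$ and a far contribution from $\R^n\setminus 3Q_0$, and then realizing the near part by a sparse operator. Fix $Q_0\in\cQ$ and $x\in Q_0$, and write
\[
I_\al f(x)
=
\int_{3Q_0}\frac{f(y)}{|x-y|^{n-\al}}\,dy
+
\int_{\R^n\setminus 3Q_0}\frac{f(y)}{|x-y|^{n-\al}}\,dy
=:
\mathrm{N}(x)+\mathrm{F}(x).
\]
For the far part I would decompose $\R^n\setminus 3Q_0\subset\bigcup_{k\ge0}(\kp^{k+1}Q_0\setminus\kp^kQ_0)$. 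On the part of the $k$-th annulus lying outside $3Q_0$ one has $|x-y|\ge c_{\kp}\,\kp^k\ell(Q_0)\approx|\kp^kQ_0|^{1/n}$ for $x\in Q_0$, so the kernel is at most $C|\kp^kQ_0|^{(\al-n)/n}$. Integrating $f$ over $\kp^{k+1}Q_0$ and summing the resulting terms $C|\kp^kQ_0|^{\al/n}\fint_{\kp^{k+1}Q_0}f$, which by $|\kp^kQ_0|^{\al/n}=\kp^{-\al}|\kp^{k+1}Q_0|^{\al/n}$ telescope into $C_{\8}$ up to a $\kp$-dependent constant, yields $\mathrm{F}(x)\le C\,C_{\8}$.

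For the near part I would run the dyadic annular decomposition about $x$. Letting $Q^{(j)}\in\cD(Q_0)$ be the cube of side $2^{-j}\ell(Q_0)$ containing $x$, on the region $\{y:|x-y|\approx 2^{-j}\ell(Q_0)\}\cap 3Q_0\subset 3Q^{(j)}$ the kernel is comparable to $|Q^{(j)}|^{(\al-n)/n}$, so the $j$-th piece is at most $C|Q^{(j)}|^{\al/n}\fint_{3Q^{(j)}}f$. Summing over $j$ gives
\[
\mathrm{N}(x)
\le C
\sum_{\substack{Q\in\cD(Q_0)\\ Q\ni x}}
|Q|^{\al/n}\fint_{3Q}f\,dy
=:
C\,I_\al^{\cD(Q_0)}f(x),
\]
the full dyadic fractional operator, and it remains to dominate $I_\al^{\cD(Q_0)}$ by a sparse operator.

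Here the construction parallels Lemma \ref{lem2.3}, except that the stopping is performed on the averages $\fint_{3Q}f$ rather than on $|Q|^{\al/n}\fint_{3Q}f$. Starting from $Q_0$, I would declare the $\cS$-children of a selected cube $S$ to be the maximal $Q\in\cD(Q_0)$ with $Q\subsetneq S$ and $\fint_{3Q}f>\Lm\fint_{3S}f$, where $\Lm$ is a large dimensional constant. Applying the weak-$(1,1)$ bound for $M$ to $f\1_{3S}$ (and using $3Q\subset 3S$ for $Q\subset S$) shows that the children have total measure at most $\tfrac12|S|$ once $\Lm$ exceeds twice the weak-$(1,1)$ constant times $3^n$; setting $E_S=S\setminus\bigcup\{\text{children}\}$ then makes $\cS$ a $1/2$ sparse family, and by maximality $\fint_{3R}f\le\Lm\fint_{3\pi(R)}f$ whenever $\pi(R)$ denotes the minimal cube of $\cS$ containing $R$. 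Grouping the tower of $R\ni x$ according to $\pi(R)$, I would estimate
\[
I_\al^{\cD(Q_0)}f(x)
\le
\Lm
\sum_{\substack{S\in\cS\\ S\ni x}}
\fint_{3S}f
\sum_{\substack{R\ni x\\ \pi(R)=S}}
|R|^{\al/n}
\le
\frac{\Lm}{1-2^{-\al}}
\sum_{\substack{S\in\cS\\ S\ni x}}
|S|^{\al/n}\fint_{3S}f
=
C\,I_\al^{\cS}f(x),
\]
the inner geometric series $\sum_{m\ge0}2^{-m\al}$ converging precisely because $\al>0$. Combining this with the bounds for $\mathrm{N}$ and $\mathrm{F}$ proves the lemma. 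The main obstacle is exactly this last step: unlike the maximal operator of Lemma \ref{lem2.3}, which is a supremum and therefore tolerates $\al=0$, the additive operator $I_\al$ forces us to sum the whole dyadic tower, and only the strict positivity $\al>0$ makes that sum collapse onto the sparse cubes. A secondary technical point is securing $1/2$ sparseness despite the dilation $3Q$ inside the averages, which is why the stopping threshold $\Lm$ must be taken large rather than equal to the ratio $a=9^n2^{n+1-\al}$ used in Lemma \ref{lem2.3}.
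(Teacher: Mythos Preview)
Your proof is correct and follows essentially the same route as the paper: split $I_\al f$ on $Q_0$ into the near part over $3Q_0$ (controlled by the full dyadic operator $I_\al^{\cD(Q_0)}$) and the far part (controlled by $C_\infty$), then stop on the unweighted averages $\fint_{3Q}f$ to build a $1/2$-sparse family and collapse the dyadic tower via the geometric series $\sum_{m\ge0}2^{-m\al}$. The only cosmetic difference is that the paper phrases the stopping via level sets $D_k=\{Q:\fint_{3Q}f\ge a_0a^k\}$ with $a=9^n2^{n+1}$ rather than as an iterated parent--child selection, and your closing remark comparing $\Lambda$ with the constant $9^n2^{n+1-\al}$ from Lemma~\ref{lem2.3} is slightly off (the paper simply drops the $\al$ since here the stopping is on $\fint_{3Q}f$, not on $|Q|^{\al/n}\fint_{3Q}f$), but this does not affect the argument.
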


\begin{proof}
For all $x\in Q_0$ it follows that 
\[
I_{\al}f(x)
\le C
\lt(
I_{\al}^{\cD(Q_0)}f(x)
+
C_{\8}
\rt),
\]
where 
\[
I_{\al}^{\cD(Q_0)}f(x)
=
\sum_{Q\in\cD(Q_0)}
\1_{Q}(x)|Q|^{\al/n}\fint_{3Q}f\,dy.
\]
Indeed, 
for $x,y\in Q_0$ with $x\neq y$, 
we notice that 
\[
\sum_{\substack{Q\in\cD(Q_0) \\ Q\ni x, 3Q\ni y}}
|Q|^{\al/n-1}
\approx
\frac1{|x-y|^{n-\al}}.
\]
This implies together with Fubini's theorem 
\[
\int_{3Q_0}
\frac{f(y)}{|x-y|^{n-\al}}\,dy
\approx
I_{\al}^{\cD(Q_0)}f(x).
\]
We have also that 
\[
\int_{\R^n\setminus 3Q_0}
\frac{f(y)}{|x-y|^{n-\al}}\,dy
\le C
C_{\8}.
\]

We now construct the sparse set $\cS$.
Let 
$a_0=\fint_{3Q_0}f\,dx$ 
and fix $a=9^n2^{n+1}$. 
For each $k=0,1,2,\ldots$, 
define 
\[
D_k
=
\bigcup\lt\{
Q\in\cD(Q_0):\,
\fint_{3Q}f\,dx
\ge
a_0a^k
\rt\}.
\]
Considering the maximal cubes with respect to inclusion,
we can write 
\[
D_k=\bigcup_jQ_j^k,
\]
where the cubes $\{Q_j^k\}$ are pairwise disjoint.
By the maximality of $Q_j^k$, 
\begin{equation}\label{3.1}
a_0a^k
\le
\fint_{3Q_j^k}f\,dx
\le 2^n
a_0a^k.
\end{equation}
By the same way as the proof of Lemma \ref{lem2.3}, 
letting 
\[
E_{Q_j^k}
=
Q_j^k\setminus D_{k+1},
\]
we can verify that 
the family $\cS=\{Q_j^k\}$
is $1/2$ sparse. 

Finally, if we let 
\[
\cD_j^k
=
\lt\{Q\in\cD(Q_0):\,
Q\subset Q_j^k,\,
a_0a^k
\le
\fint_{3Q}f\,dx
<
a_0a^{k+1}
\rt\},
\]
then we see that 
\[
\cD(Q_0)
=
\bigcup_{k,j}\cD_j^k.
\]
For all $x\in Q_j^k$ 
\begin{align*}
\lefteqn{
\sum_{Q\in\cD_j^k}
\1_{Q}(x)
|Q|^{\al/n}\fint_{3Q}f\,dy
}\\ &\le
a_0a^{k+1}
\sum_{Q\in\cD_j^k}
\1_{Q}(x)|Q|^{\al/n}
\le C
a_0a^{k+1}
|Q_j^k|^{\al/n}
\le a
|Q_j^k|^{\al/n}\fint_{3Q_j^k}f\,dy,
\end{align*}
where we have used \eqref{3.1}.
Thus, 
for all $x\in Q_0$, 
\[
I_{\al}^{\cD(Q_0)}f(x)
\le a
\sum_{Q\in\cS}
\1_{Q}(x)
|Q|^{\al/n}\fint_{3Q}f\,dy.
\]
This complete the proof. 
\end{proof}

\subsection{Proof of Theorem \ref{thm1.5} (I)}\label{ssec3.1}
Assume that \text{(a)} holds. 
The assertion \text{(b)} follows from 
the pointwise inequality 
$M_{\al}f(x)\le C|I_{\al}f(x)|$. 
To prove \text{(c)}, 
we shall obtain the contradiction. 
So, we assume that \eqref{1.9} fails. 
Then, 
for any $m\in\N$, 
there exists $Q_m\in\cQ$ such that 
\begin{equation}\label{3.2}
2\|w\1_{Q_m}\|_{L^{q,\lm}}
>
\|w\1_{mQ_m}\|_{L^{q,\lm}}.
\end{equation}
Now we define for $m>2$ 
\[
f_m(y)
=
\frac
{\1_{mQ_m\setminus 2Q_m}(y)}
{|y-c(Q_m)|^\al}.
\]
Then we notice that, 
for any $x\in Q_m$, 
\[
I_{\al}f_m(x)
\ge C
\int_{mQ_m\setminus 2Q_m}
\frac{dy}{|y-c(Q_m)|^n}
\approx
\int_{\ell(Q_m)}^{m\ell(Q_m)}\frac{dt}t
\approx
\log m.
\]
This implies by \text{(a)} 
\begin{equation}\label{3.3}
\log m
\|w\1_{Q_m}\|_{L^{q,\lm}}
\le C
\|(I_{\al}f_m)w\|_{L^{q,\lm}}
\le C
\|f_mw\|_{L^{p,\lm}}.
\end{equation}
We recall that 
\[
\frac1{q_0}=\frac1{p_0}-\frac{\al}n
\text{ and }
\frac{q}{q_0}=\frac{p}{p_0}.
\]
If we define $r$ by 
$1/p=1/q+1/r$, then 
\[
\frac1r
=
\frac1p-\frac1q
=
\frac{p_0}p
\lt(\frac1{p_0}-\frac1{q_0}\rt)
>
\lt(\frac1{p_0}-\frac1{q_0}\rt)
=
\frac{\al}n,
\]
which means $1<r<n/\al$. 
By H\"{o}lder's inequality 
for the Morrey norms with exponents 
$1/p=1/q+1/r$ and 
$1/p_0=1/q_0+\al/n$, 
\begin{equation}\label{3.4}
\|f_mw\|_{L^{p,\lm}}
=
\|f_mw\|_{\cM^{p_0}_p}
\le
\|w\1_{mQ_m}\|_{\cM^{q_0}_q}
\|f_m\|_{\cM^{n/\al}_r}.
\end{equation}
Since, 
\[
\|f_m\|_{\cM^{n/\al}_r}
\le
\|f_m\|_{L^{n/\al}}
\approx
(\log m)^{\al/n},
\]
the inequalities 
\eqref{3.2}--\eqref{3.4} 
yield the contradiction 
$(\log m)^{1-\al/n}\le C$. 
Thus, the statement \text{(c)} holds. 

\subsection{Proof of Theorem \ref{thm1.5} (II)}\label{ssec3.2}
Assume the statements \text{(b)} and \text{(c)}. 
To prove sufficiency we may assume that 
the function $f$ is non-negative and bounded and 
that $f$ has compact support.  
Fix $Q_0\in\cQ$. 
We shall evaluate the quantity 
\[
\lt(\frac1{|Q_0|^{\lm/n}}\int_{Q_0}[(I_{\al}f)w]^q\,dx\rt)^{1/q}.
\]
By Lemma \ref{lem3.1} 
we can select a $1/2$ sparse family 
$\cS\subset\cD(Q_0)$ 
such that, for all $x\in Q_0$, 
\[
I_{\al}f(x)
\le C
\lt(
I_{\al}^{\cS}f(x)
+
C_{\8}
\rt),
\]
where 
\[
I_{\al}^{\cS}f(x)
=
\sum_{Q\in\cS}
\1_{Q}(x)|Q|^{\al/n}\fint_{3Q}f\,dy
\]
and 
\[
C_{\8}
=
\sum_{k=0}^{\8}
|\kp^kQ_0|^{\al/n}\fint_{\kp^kQ_0}f\,dx.
\]
It follows from \eqref{1.9} that 
\begin{align*}
C_{\8}\lt(\frac1{|Q_0|^{\lm/n}}\int_{Q_0}w^q\rt)^{1/q}
&\le
C_{\8}\|w\1_{Q_0}\|_{L^{q,\lm}}
\\ &\le 
\sum_{k=0}^{\8}
2^{-k}
|\kp^kQ_0|^{\al/n-1}
\|w\1_{\kp^kQ_0}\|_{L^{q,\lm}}
\int_{\kp^kQ_0}f\,dx,
\end{align*}
by H\"{o}lder's inequality \eqref{1.4}, 
that
\[
\int_{\kp^kQ_0}f\,dx
\le C
\|w^{-1}\1_{\kp^kQ_0}\|_{H^{p',\lm}}
\|fw\1_{\kp^kQ_0}\|_{L^{p,\lm}},
\]
by the use of Theorem \ref{thm1.4} (I),
and that
\[
C_{\8}\lt(\frac1{|Q_0|^{\lm/n}}\int_{Q_0}w^q\rt)^{1/q}
\le C C_2
\|fw\|_{L^{p,\lm}}
\sum_{k=0}^{\8}2^{-k}
=C C_2
\|fw\|_{L^{p,\lm}}.
\]

Let $u=w^q$. We wish to estimate 
$\|I_{\al}^{\cS}f\|_{L^q(u)}$
by way of a duality argument. 
To this end, we take a function $g$, 
which is non-negative, supported in $Q_0$ and 
satisfies $\|g\|_{L^{q'}(u)}=1$, 
and evaluate the quantity
\[
\text{(i)}
=
\sum_{Q\in\cS}
|Q|^{\al/n}
\fint_{3Q}f\,dy
\int_{Q}g\,du.
\]

By the statement \text{(b)}, 
\begin{align*}
|Q|^{\al/n}
\lt(\frac{w^{-1}(Q)}{|Q|}\rt)
\cdot
|Q|^{1/q_0}
\lt(\frac{u(Q)}{|Q|}\rt)^{1/q}
&\le
\|(M_{\al}[w^{-1}\1_{Q}])w\|_{L^{q,\lm}}\\
&\le C_2
\|\1_{Q}\|_{L^{p,\lm}}\\
&=C_2
|Q|^{1/p_0}.
\end{align*}
Since $1/q_0=1/p_0-\al/n$, 
\[
\lt(\frac{u(Q)}{|Q|}\rt)
\lt(\frac{w(Q)}{|Q|}\rt)^q
\le C_2^q,
\]
which means that 
$u$ belongs to $A_{q+1}$ with the estimate
$[u]_{A_{q+1}}\le C_2^q$. 
This and Lemma \ref{lem2.2} give us that 
\begin{equation}\label{3.5}
u(Q)\le C C_2^qu(E_{Q})
\text{ for all }Q\in\cS.
\end{equation}

It follows from \eqref{3.5} that 
\begin{align*}
\text{(i)}
&=
\sum_{Q\in\cS}
|Q|^{\al/n}
\fint_{3Q}f\,dy
\frac{\int_{Q}g\,du}{u(Q)}
u(Q)
\\ &\le C C_2^q
\sum_{Q\in\cS}
|Q|^{\al/n}
\fint_{3Q}f\,dy
\frac{\int_{Q}g\,du}{u(Q)}
u(E_{Q})^{1/q+1/q'}
\\ &\le C C_2^q
\lt\{
\sum_{Q\in\cS}
\lt(
|Q|^{\al/n}
\fint_{3Q}f\,dy
\rt)^qu(E_{Q})
\rt\}^{1/q}
\\ &\quad\times
\lt\{
\sum_{Q\in\cS}
\lt(\frac{\int_{Q}g\,du}{u(Q)}\rt)^{q'}
u(E_{Q})
\rt\}^{1/q'}
\\ &\le C C_2^q q
\lt(\int_{Q_0}[(M_{\al}f)w]^q\,dx\rt)^{1/q},
\end{align*}
where in the last inequality 
we have used the $L^{q'}(u)$ boundedness of 
the dyadic weighted Hardy-Littlewood maximal operator 
with respect to $u$ with the norm less than or equal to $q$;
see (\ref{5.2}).

This and the statement \text{(b)} yield 
\[
\lt(\frac1{|Q_0|^{\lm/n}}\int_{Q_0}[(I_{\al}^{\cS}f)w]^q\,dx\rt)^{1/q}
\le C C^{q+1}
\|fw\|_{L^{p,\lm}},
\]
which completes the proof.

\section{The power weight cases and some equivalences}\label{sec4}
In this section 
we investigate the case of the power weight cases and 
introduce some equivalence conditions for our theorems for the purpose.

We first give the certain range of the power for which the 
boundedness of $M_{\al}$ on power weighted Morrey spaces as follows.

\begin{prop}\label{pr-power}
Suppose that the 
parameters satisfy the same conditions as in Theorem \ref{thm1.4} 
and let 
$w_{\rho}(x)=|x|^{\rho}$ with $\rho>-n$.  
Then the following are equivalent. 
\begin{itemize}
\item[\text{(a)}] 
There exists a constant $C_1>0$ 
such that 
\[
\|(M_{\al}f)w_{\rho}\|_{L^{q,\lm}}
\le C_1
\|fw_{\rho}\|_{L^{p,\lm}}
\]
holds for every function $f$ with 
$fw_{\rho}\in L^{p,\lm}(\R^n)$ 
\item[\text{(b)}]
There exist a constant $C_2>0$ such that 
\[
\sup_{Q\in\cQ}
|Q|^{\al/n-1}
\|w_{\rho}\1_{Q}\|_{L^{q,\lm}}
\|w_{\rho}^{-1}\1_{Q}\|_{H^{p',\lm}}
\le C_2.
\]
\item[\text{(c)}]
The parameter $\rho$ satisfies
\[
-n+\lm\le q\rho,\quad
p\rho<n(p-1)+\lm.
\]
\end{itemize}
\end{prop}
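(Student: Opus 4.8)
The plan is to run the cycle $(a)\Rightarrow(b)\Rightarrow(c)\Rightarrow(a)$. The implication $(a)\Rightarrow(b)$ is free: it is exactly Theorem \ref{thm1.4}(I) specialized to $w=w_\rho$. The real content is the equivalence $(b)\Leftrightarrow(c)$, which I would obtain by evaluating the two factors in \eqref{1.6} explicitly for the homogeneous weight $w_\rho$, together with a verification that the range in $(c)$ supplies the auxiliary hypothesis $(c)$ of Theorem \ref{thm1.4} needed to invoke its part $(II)$ in order to return to $(a)$.

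To prove $(b)\Leftrightarrow(c)$ I would exploit the exact dilation homogeneity $w_\rho(tx)=t^\rho w_\rho(x)$ to split the supremum over $Q\in\cQ$ in \eqref{1.6} into two regimes. When $Q$ is far from the origin ($\mathrm{dist}(0,Q)\gtrsim\ell(Q)$), $w_\rho$ is comparable to the constant $|c(Q)|^\rho$ on $Q$; the two occurrences of this constant cancel in the product $|Q|^{\al/n-1}\|w_\rho\1_Q\|_{L^{q,\lm}}\|w_\rho^{-1}\1_Q\|_{H^{p',\lm}}$, reducing it to the unweighted quantity, which is scale invariant and finite (one computes $\|\1_Q\|_{L^{q,\lm}}\approx|Q|^{1/q_0}$, the dual factor via Lemma \ref{lem2.1}, and checks that the powers of $\ell(Q)$ sum to zero). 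Hence the far regime puts no restriction on $\rho$. When $Q$ is near the origin, homogeneity reduces matters to unit cubes containing $0$, and the degeneracy of $w_\rho$ at $0$ is decisive: finiteness of $\|w_\rho\1_Q\|_{L^{q,\lm}}$ forces $\int_R|x|^{q\rho}\,dx\lesssim|R|^{\lm/n}$ as $R$ shrinks to $0$, i.e.\ $q\rho\ge\lm-n$, while finiteness of $\|w_\rho^{-1}\1_Q\|_{H^{p',\lm}}$ forces the integrability of $|x|^{-p'\rho}$ near $0$, relaxed by the Hausdorff-content budget, i.e.\ $p\rho<n(p-1)+\lm$. Matching the two regimes yields $(b)\Leftrightarrow(c)$.

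Both the near-origin evaluation of $\|w_\rho^{-1}\1_Q\|_{H^{p',\lm}}$ and the later appeal to Theorem \ref{thm1.4}(II) hinge on producing a near-optimal $b\in\cB_\lm$, and this is where the work concentrates. The key point is that the infimum defining $H^{p',\lm}$ lets $b$ concentrate its content near the origin, where $b^{1-p'}$ is then small since $1-p'<0$; this is precisely what relaxes the naive threshold $p\rho<n(p-1)$ to $p\rho<n(p-1)+\lm$. For a cube $Q_0$ I would take $b_{Q_0}$ to be a normalized truncated power weight at scale $\ell(Q_0)$, centered at $c(Q_0)$ when $Q_0$ is far from the origin but made singular at the origin (of the form $\approx\ell(Q_0)^{-\lm}\min\{1,(\ell(Q_0)/|x|)^\gamma\}$) when $Q_0$ sits near the origin, with a fixed exponent $\gamma\in(\lm,n)$. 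The constraint $\gamma<n$ keeps $b_{Q_0}\in A_1$ with a constant independent of $Q_0$, while $\gamma>\lm$ makes $\int_{\R^n}b_{Q_0}\,dH^{\lm}$ converge, so that after normalization $b_{Q_0}\in\cB_\lm$ uniformly; moreover $b_{Q_0}\approx\ell(Q_0)^{-\lm}$ on $Q_0$, which makes it near-optimal.

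With $b_{Q_0}$ fixed, it remains to verify \eqref{1.7} and \eqref{1.8} for $W:=w_\rho b_{Q_0}^{1/p}=|x|^\rho b_{Q_0}^{1/p}$, and I expect this to be the hardest part. For cubes near the origin $W$ is, up to the truncation, a single power of $|x|$, so \eqref{1.8} reduces to checking that the combined exponent lies in $(-n,n(s-1))$ for some fixed $s$; here the admissible range $-n+\lm\le q\rho$, $p\rho<n(p-1)+\lm$ is exactly what forces $W\in A_s$ with $s$ and the constant uniform in $Q_0$, and Lemma \ref{lem2.2} then supplies the measure comparison used in \eqref{1.7}. For cubes far from the origin $w_\rho$ is essentially constant on the relevant scales, so $W$ behaves like $b_{Q_0}^{1/p}$, which is $A_s$ by construction. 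The bound \eqref{1.7}, restricted to subcubes of $9Q_0$ and carrying the scale factor $\ell(Q_0)^{\lm/p}$, is then the matching homogeneity computation for the same $W$. The genuine difficulty is the two-center bookkeeping, the singularity of $|x|^\rho$ at the origin against a truncation that may be centered elsewhere, together with the need to keep every constant strictly independent of $Q_0$; once \eqref{1.7} and \eqref{1.8} are secured, Theorem \ref{thm1.4}(II) applied to $w=w_\rho$ promotes $(b)$ to $(a)$ and closes the cycle.
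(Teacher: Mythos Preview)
The paper does not give its own proof here; it simply remarks that the argument is almost the same as \cite[Proposition~4.2]{Ta} and omits it. Your cycle $(a)\Rightarrow(b)\Rightarrow(c)\Rightarrow(a)$, using Theorem~\ref{thm1.4}(I) for the first arrow, an explicit near/far-from-origin computation of the two factors in \eqref{1.6} for $(b)\Leftrightarrow(c)$, and a truncated power weight $b_{Q_0}$ to feed into Theorem~\ref{thm1.4}(II) for the last arrow, is precisely the strategy carried out in \cite{Ta}, so your outline matches the intended proof.
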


Since the proof of Proposition \ref{pr-power} 
is almost the same as the one of 
\cite[Proposition 4.2]{Ta}, we omit the proof. 
Meanwhile, as we observed in \cite{NS15}, the condition \eqref{1.9} with the power weight 
$w=w_{\rho}$ is equivalent to 
$q\rho>-n+\lm$. 
Hence, we obtain the power weight result for $I_{\al}$ as follows. 

\begin{prop}
Suppose the parameters satisfy the same conditions as in Theorem \ref{thm1.5} 
and let 
$w_{\rho}(x)=|x|^{\rho}$ with $\rho>-n$.  
Then the following are equivalent. 
\begin{itemize}
\item[\text{(a)}] 
There exists a constant $C_1>0$ 
such that 
\[
\|(I_{\al}f)w_{\rho}\|_{L^{q,\lm}}
\le C_1
\|fw_{\rho}\|_{L^{p,\lm}}
\]
holds for every function $f$ with 
$fw_{\rho}\in L^{p,\lm}(\R^n)$ 
\item[\text{(b)}]
There exist constants $C_2>0$ such that 
\[
|Q|^{\al/n-1}
\|w_{\rho}\1_{Q}\|_{L^{q,\lm}}
\|w_{\rho}^{-1}\1_{Q}\|_{H^{p',\lm}}
\le C_2
\text{ and }
2\|w_{\rho}\1_{Q}\|_{L^{q,\lm}}
\le
\|w_{\rho}\1_{\kp Q}\|_{L^{q,\lm}}
\]
hold for some $\kp>1$ and all $Q\in\cQ$.
\item[\text{(c)}]
The parameter $\rho$ satisfies
\[
-n+\lm<q\rho,\quad
p\rho<n(p-1)+\lm.
\]
\end{itemize}
\end{prop}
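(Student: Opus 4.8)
The plan is to obtain this proposition entirely by assembling three results already in hand—Theorem \ref{thm1.5}, Proposition \ref{pr-power}, and the power-weight characterization of \eqref{1.9} recorded just above—rather than by proving any fresh estimate. Since the parameter constraints here are those of Theorem \ref{thm1.5} (with $0<\al<n$), they fall inside the range allowed in Proposition \ref{pr-power} (which permits $0\le\al<n$), so both results apply verbatim to $w=w_\rho$.

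First I would settle (a) $\Leftrightarrow$ (b). By Theorem \ref{thm1.5}, the boundedness of $I_\al$ with weight $w_\rho$ is equivalent to the conjunction of the boundedness of $M_\al$ with the same weight and the condition \eqref{1.9}. Proposition \ref{pr-power} then lets me replace the boundedness of $M_\al$ by the testing inequality
\[
\sup_{Q\in\cQ}|Q|^{\al/n-1}\|w_\rho\1_Q\|_{L^{q,\lm}}\|w_\rho^{-1}\1_Q\|_{H^{p',\lm}}\le C_2.
\]
Feeding this substitution into the conclusion of Theorem \ref{thm1.5} produces precisely statement (b) of the present proposition, which couples that testing inequality with \eqref{1.9}. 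This closes (a) $\Leftrightarrow$ (b).

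Next I would prove (b) $\Leftrightarrow$ (c) by converting each half of (b) into its explicit arithmetic form for power weights. On the one side, Proposition \ref{pr-power} identifies the testing inequality above with the pair of conditions
\[
-n+\lm\le q\rho\quad\text{and}\quad p\rho<n(p-1)+\lm.
\]
On the other side, the observation from \cite{NS15} quoted before the proposition states that \eqref{1.9} for $w=w_\rho$ is equivalent to the single strict bound $q\rho>-n+\lm$. Intersecting the two, I would note that $q\rho>-n+\lm$ absorbs the weaker bound $-n+\lm\le q\rho$, so the conjunction collapses to $-n+\lm<q\rho$ and $p\rho<n(p-1)+\lm$, which is exactly (c).

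The one point I would flag explicitly—and the only place where care is genuinely needed—is the endpoint $q\rho=-n+\lm$. There the $M_\al$ testing inequality of Proposition \ref{pr-power} still holds, since it admits equality, whereas \eqref{1.9} fails, since it demands a strict inequality; thus $M_\al$ is bounded but $I_\al$ is not. This lone endpoint is what promotes the weak inequality in Proposition \ref{pr-power}(c) to the strict inequality in (c) above, and it is the concrete, power-weight incarnation of the structural gap between $M_\al$ and $I_\al$ on Morrey spaces emphasized in the introduction.
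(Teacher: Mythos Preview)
Your proposal is correct and matches the paper's approach: the paper does not write out a proof at all, but simply says ``Hence, we obtain the power weight result for $I_{\al}$ as follows,'' indicating that the proposition is meant to follow immediately from Theorem~\ref{thm1.5}, Proposition~\ref{pr-power}, and the \cite{NS15} characterization of \eqref{1.9} for power weights---exactly the three ingredients you assemble. Your explicit handling of the endpoint $q\rho=-n+\lm$ is a welcome clarification of a point the paper leaves implicit.
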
 

Finally, we note one observation. 
As we mentioned in Theorem \ref{thm1.4}, 
the weight problem for the maximal operator $M$
is still open.  
One finds
that the problem is difficult
since
it is difficult to calculate the quantities 
\[
\|w\1_{Q}\|_{L^{q,\lm}},
\quad
\|w^{-1}\1_{Q}\|_{H^{p',\lm}}
\]
appearing in \eqref{1.6}.  
Indeed, in the Lebesgue setting 
$p_0=p$ and $q_0=q$, 
it is easy to calculate these quantities.
Thus,
it is important to calculate these quantities
when $p_0 \ne p$ and $q_0 \ne q$. 
At least, we have the explicit formula for the quantity 
$
\|w\1_{Q}\|_{L^{q,\lm}}
$ as follows. 

\begin{prop}\label{prop4.1}
Let 
$0\le\al<n$, 
$1<p<p_0<\8$, 
$1<q<q_0<\8$ 
and $w$ be a weight. 
Suppose that 
\[
\frac1{q_0}=\frac1{p_0}-\frac{\al}n
\text{ and }
\frac{q}{q_0}=\frac{p}{p_0}.
\]
Set 
$\lm/n=1-p/p_0=1-q/q_0$. 
If we assume Theorem \ref{thm1.4} \text{(b)}, 
then, for all $Q\in\cQ$, 
\[
\|w\1_{Q}\|_{L^{q,\lm}}
\approx
|Q|^{1/q_0}
\lt(\fint_{Q}w^q\,dx\rt)^{1/q}.
\]
That is, then the Morrey norm 
is attained on the full cube $Q$.
\end{prop}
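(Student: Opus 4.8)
The plan is to prove the two inequalities hidden in the symbol $\approx$ separately, and to observe that only the hypothesis (b) of Theorem \ref{thm1.4} is needed for the nontrivial one. The lower bound is immediate: since $w\1_Q$ is supported on $Q$, testing the Morrey supremum defining $\|w\1_Q\|_{L^{q,\lm}}=\|w\1_Q\|_{\cM^{q_0}_q}$ against the single cube $R=Q$ yields
\[
\|w\1_Q\|_{L^{q,\lm}}
\ge
|Q|^{1/q_0}\lt(\fint_Q w^q\,dx\rt)^{1/q},
\]
with constant $1$, so no assumption is required here. It therefore remains to produce the matching upper bound, and the content of the proposition is precisely that the supremum over all $R\in\cQ$ is, up to a constant, already realized at $R=Q$.

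For the reverse inequality I would use (b) to trade the intractable quantity $\|w\1_Q\|_{L^{q,\lm}}$ for the quantity $\|w^{-1}\1_Q\|_{H^{p',\lm}}$, which is easy to bound from below. Rearranging \eqref{1.6} applied to the cube $Q$ gives
\[
\|w\1_Q\|_{L^{q,\lm}}
\le
\frac{C_2\,|Q|^{1-\al/n}}{\|w^{-1}\1_Q\|_{H^{p',\lm}}},
\]
so it suffices to bound $\|w^{-1}\1_Q\|_{H^{p',\lm}}$ from below. Here the key step is to invoke Lemma \ref{lem2.1}, which identifies this norm (up to constants) with $\sup_f\int w^{-1}\1_Q\,|f|\,dx$, the supremum taken over unit-norm $f\in L^{p,\lm}$. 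Choosing the admissible test function $f=\1_Q/\|\1_Q\|_{L^{p,\lm}}$ and using the elementary exact identity $\|\1_Q\|_{L^{p,\lm}}=|Q|^{1/p_0}$ (verified by inspecting the Morrey supremum, exactly as in the lower bound above), I obtain
\[
\|w^{-1}\1_Q\|_{H^{p',\lm}}
\ge
c\,|Q|^{1-1/p_0}\fint_Q w^{-1}\,dx.
\]

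Combining the two displays and using the defining relation $1/q_0=1/p_0-\al/n$ to simplify the power of $|Q|$ via $(1-\al/n)-(1-1/p_0)=1/q_0$, I arrive at
\[
\|w\1_Q\|_{L^{q,\lm}}
\le
C C_2\,|Q|^{1/q_0}\lt(\fint_Q w^{-1}\,dx\rt)^{-1}.
\]
Finally, the Jensen (harmonic--arithmetic and power-mean) inequalities give $\lt(\fint_Q w^{-1}\rt)^{-1}\le\fint_Q w\le\lt(\fint_Q w^q\rt)^{1/q}$, turning the last display into the desired upper bound. I do not expect a genuine obstacle: the only points requiring care are the exact evaluation $\|\1_Q\|_{L^{p,\lm}}=|Q|^{1/p_0}$, the selection of the right single test function to lower-bound the $H^{p',\lm}$-norm through Lemma \ref{lem2.1}, and the bookkeeping of exponents through $1/q_0=1/p_0-\al/n$. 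The conceptual heart of the argument is simply recognizing that hypothesis (b) lets one convert the hard $L^{q,\lm}$-norm into the reciprocal of an $H^{p',\lm}$-norm that a single test function already controls from below.
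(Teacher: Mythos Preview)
Your argument is correct, and it is genuinely different from---and noticeably shorter than---the paper's own proof. The paper proceeds by a good-set/bad-set decomposition: it sets $u=w^q$, $\beta=nq/q_0$, excises the level set $\Omega=\{x\in Q:\,M_{\beta}[u\1_Q](x)>2\cdot 3^n\gamma\}$ (where $\gamma=|Q|^{\beta/n}\fint_Q u$), and shows via the weak-$(1,1)$ bound for $M$ that the remaining set $E=Q\setminus\Omega$ has $|E|\ge|Q|/2$. On $E$ the Morrey norm $\|w\1_E\|_{L^{q,\lm}}$ is automatically dominated by $\gamma^{1/q}$ by construction; the authors then combine this with the H\"{o}lder inequality \eqref{1.5} in the form $|Q|\lesssim\|w\1_E\|_{L^{p,\lm}}\|w^{-1}\1_Q\|_{H^{p',\lm}}$ together with hypothesis (b) to close the loop. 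By contrast, you bypass the maximal-function construction entirely: you apply (b) once to swap $\|w\1_Q\|_{L^{q,\lm}}$ for the reciprocal of $\|w^{-1}\1_Q\|_{H^{p',\lm}}$, lower-bound the latter via Lemma~\ref{lem2.1} using the single test function $\1_Q/|Q|^{1/p_0}$, and finish with Jensen's inequality $\bigl(\fint_Q w^{-1}\bigr)^{-1}\le\bigl(\fint_Q w^q\bigr)^{1/q}$. Your route is more elementary and avoids the auxiliary set $E$ altogether; the paper's route, while longer, has the minor advantage of giving the control $\|w\1_E\|_{L^{q,\lm}}\lesssim\gamma^{1/q}$ on an explicit large subset, which could be of independent interest in other arguments. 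Both proofs rest on the same duality between $L^{p,\lm}$ and $H^{p',\lm}$ and on hypothesis (b); yours simply exploits that duality more directly.
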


\begin{proof}
The relation 
\[
\|w\1_{Q}\|_{L^{q,\lm}}
\ge
|Q|^{1/q_0}
\lt(\fint_{Q}w^q\,dx\rt)^{1/q}
\]
follows automatically. 
We shall prove the converse. 

Let $u=w^q$, $\bt=nq/q_0$ 
and 
$\gm=|Q|^{\bt/n}\fint_{Q}u\,dx$.
Consider 
\[
\Om
=
\{x\in Q:\,
M_{\bt}[u\1_{Q}](x)
>
2\cdot3^n\gm
\}.
\]
Since, for all $x\in Q$, 
\[
M_{\bt}[u\1_{Q}](x)
\le
|Q|^{\bt/n}
M[u\1_{Q}](x),
\]
we have that 
\[
\Om
\subset
\lt\{x\in Q:\,
M[u\1_{Q}](x)
>
\frac{2\cdot3^n\gm}{|Q|^{\bt/n}}
\rt\}.
\]
The weak-$(1,1)$ boundedness of $M$ gives us that 
\[
|\Om|
\le 3^n
\frac{|Q|^{\bt/n}}{2\cdot3^n\gm}
\int_{Q}u
=
\frac12|Q|.
\]
Hence, if we let 
$E=Q\setminus\Om$, we have 
$|E|\ge|Q|/2$. 
It follows from H\"{o}lder's inequality \eqref{1.5} that 
\[
\frac12|Q|
\le|E|\le C
\|w\1_{E}\|_{L^{p,\lm}}
\|w^{-1}\1_{E}\|_{H^{p',\lm}}.
\]
Because we always have 
\[
\|w^{-1}\1_{E}\|_{H^{p',\lm}}
\le
\|w^{-1}\1_{Q}\|_{H^{p',\lm}}
\le C_2
\lt(|Q|^{\al/n-1}\|w\1_{Q}\|_{L^{q,\lm}}\rt)^{-1},
\]
we have that 
\[
\|w\1_{Q}\|_{L^{q,\lm}}
\le C|Q|^{-\al/n}
\|w\1_{E}\|_{L^{p,\lm}}.
\]
By the definition of the Morrey norm, 
using 
$1/p_0-1/q_0=\al/n>0$ 
and $q>p$, we see that 
\[
\|w\1_{E}\|_{L^{p,\lm}}
\le
|Q|^{1/p_0-1/q_0}
\|w\1_{E}\|_{L^{q,\lm}}.
\]
Thus, noticing 
$1/p_0-1/q_0-\al/n=0$ 
and the fact that 
\[
\|w\1_{E}\|_{L^{q,\lm}}
\le
(2\cdot3^n\gm)^{1/q},
\]
we conclude that 
\[
\|w\1_{Q}\|_{L^{q,\lm}}
\le C
|Q|^{1/q_0}
\lt(\fint_{Q}u\,dx\rt)^{1/q},
\]
which proves the proposition. 
\end{proof}

\section{Appendix--Universal estimates}

Let $\mu$ be a Radon measure on ${\mathbb R}^n$.
An example we envisage here is the weighted measure
$\mu=w\,dx$.
We consider the following dyadic weighted Hardy-Littlewood maximal operator:
\[
M_{{\rm dyadic},w}f(x)=
\sup_{Q}\frac{\chi_Q(x)}{w(Q)}\int |f|w\,dy,
\]
where $Q$ moves over all dyadic cubes in $\R^n$.
Using a covering lemma,
we can prove
\[
w\{x \in \R^n\,:\,M_{\rm dyadic}f(x)>\lm\}
\le 
\frac{1}{\lm}\|f\chi_{\{x \in \R^n\,:\,M_{\rm dyadic}f(x)>\lm\}}\|_{L^1(w)},
\]
which yields
\begin{equation}\label{5.2}
\|M_{{\rm dyadic},w}f\|_{L^p(w)}
\le p'
\|f\|_{L^p(w)}.
\end{equation}
We consider the following weighted dyadic Morrey norm:
\[
\|f\|_{L^{p,\lm}_{\rm dyadic}(w)}
=
\sup_{Q}
\lt(\frac1{w(Q)^{\lm/n}}\int_{Q}|f|^pw\,dx\rt)^{1/p},
\]
where $Q$ moves over all dyadic cubes in $\R^n$.

\begin{lem}[Universal estimate for Morrey spaces]\label{lem:5.2}
Let $1<p<\8$ and $0<\lm<n$.
Then
\[
\|M_{{\rm dyadic},w}f\|_{L^{p,\lm}_{\rm dyadic}(w)}
\le (p'+1)
\|f\|_{L^{p,\lm}_{\rm dyadic}(w)}.
\]
\end{lem}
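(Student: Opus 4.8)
The plan is to fix a single dyadic cube $Q$, estimate the local Morrey average $w(Q)^{-\lm/(np)}\bigl(\int_Q (M_{{\rm dyadic},w}f)^p w\,dx\bigr)^{1/p}$, and then take the supremum over all dyadic $Q$. The point is to split $M_{{\rm dyadic},w}f$ on $Q$ into a part built from scales \emph{inside} $Q$, to which the universal $L^p(w)$ bound \eqref{5.2} applies directly, and a constant ``tail'' coming from the dyadic \emph{ancestors} of $Q$, which must be controlled by the Morrey hypothesis together with the monotonicity of $w$ along a nested chain. This is the dyadic realization of the $f_1/f_2$ strategy described in the introduction, with the local piece playing the role of $f_1$ and the tail the role of $f_2$.

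First I would record the pointwise decomposition on $Q$. Since dyadic cubes are nested or disjoint, the dyadic cubes containing a fixed $x\in Q$ form a chain, and each such cube $R$ is either contained in $Q$ or contains $Q$. For $R\subseteq Q$ one has $\frac{1}{w(R)}\int_R |f|\,w=\frac{1}{w(R)}\int_R |f\chi_Q|\,w\le M_{{\rm dyadic},w}(f\chi_Q)(x)$, while for $R\supseteq Q$ the average is bounded by
\[
A_Q := \sup_{\substack{R \text{ dyadic}\\ R\supseteq Q}} \frac{1}{w(R)}\int_R |f|\,w .
\]
Hence $M_{{\rm dyadic},w}f(x)\le M_{{\rm dyadic},w}(f\chi_Q)(x)+A_Q$ for all $x\in Q$. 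Applying Minkowski's inequality in $L^p(w)$ over $Q$ and then \eqref{5.2} to the local term yields
\[
\Bigl(\int_Q (M_{{\rm dyadic},w}f)^p w\Bigr)^{1/p} \le p'\Bigl(\int_Q |f|^p w\Bigr)^{1/p} + A_Q\, w(Q)^{1/p}.
\]
Dividing by $w(Q)^{\lm/(np)}$ turns the first summand into $p'$ times a quantity bounded by $\|f\|_{L^{p,\lm}_{\rm dyadic}(w)}$, so everything reduces to the tail estimate
\[
A_Q\, w(Q)^{1/p-\lm/(np)} \le \|f\|_{L^{p,\lm}_{\rm dyadic}(w)}.
\]

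The step I expect to be the main obstacle is precisely this tail bound, which is the ``special tool'' flagged in the introduction. For each ancestor $R\supseteq Q$, Hölder's inequality with respect to $w\,dx$ gives $\frac{1}{w(R)}\int_R |f|\,w \le w(R)^{-1/p}\bigl(\int_R |f|^p w\bigr)^{1/p}$, and the definition of the Morrey norm gives $\bigl(\int_R |f|^p w\bigr)^{1/p}\le \|f\|_{L^{p,\lm}_{\rm dyadic}(w)}\, w(R)^{\lm/(np)}$; combining, $\frac{1}{w(R)}\int_R |f|\,w \le \|f\|_{L^{p,\lm}_{\rm dyadic}(w)}\, w(R)^{(\lm/n-1)/p}$. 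Here $0<\lm<n$ is used decisively: the exponent $(\lm/n-1)/p$ is strictly negative, so $w(R)^{(\lm/n-1)/p}$ is nonincreasing in $w(R)$, and since $R\supseteq Q$ forces $w(R)\ge w(Q)$, taking the supremum over all ancestors produces $A_Q \le \|f\|_{L^{p,\lm}_{\rm dyadic}(w)}\, w(Q)^{(\lm/n-1)/p}$, which is exactly the required inequality. Assembling the two pieces gives the local average bounded by $(p'+1)\|f\|_{L^{p,\lm}_{\rm dyadic}(w)}$, and taking the supremum over all dyadic $Q$ completes the proof with the stated constant $p'+1$.
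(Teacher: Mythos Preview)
Your proof is correct and follows essentially the same route as the paper's: fix a dyadic cube $Q$, handle the local piece $f\chi_Q$ via the universal $L^p(w)$ bound \eqref{5.2}, and control the tail by a constant on $Q$ that is then estimated against the Morrey norm using $0<\lm<n$. The only cosmetic difference is that the paper splits $f=f\chi_Q+f\chi_{\R^n\setminus Q}$ and invokes the pointwise equality \eqref{5.5} to reduce the tail to a constant, whereas you split the supremum in $M_{{\rm dyadic},w}$ according to whether $R\subseteq Q$ or $R\supseteq Q$; your explicit H\"older-plus-monotonicity bound on $A_Q$ is exactly what underlies the paper's passage from \eqref{5.5} to \eqref{5.6}.
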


\begin{proof}
Fix a dyadic cube $Q$.
Then it suffices to show
\begin{equation}\label{5.3}
\lt(\frac1{|Q|^{\lm/n}}\int_{Q}(M_{{\rm dyadic},w}f)^p\,dx\rt)^{1/p}
\le (p'+1)
\|f\|_{L^{p,\lm}_{\rm dyadic}(w)}.
\end{equation}
To this end, we decompose $f$
according to $Q$.
Let $f_1=f\chi_Q$ and $f_2=f-f_1$.
Then 
from (\ref{5.2}) we have
\begin{equation}\label{5.4}
\lt(\frac1{|Q|^{\lm/n}}\int_{Q}(M_{{\rm dyadic},w}f_1)^p\,dx\rt)^{1/p}
\le p'
\|f\|_{L^{p,\lm}_{\rm dyadic}(w)}
\end{equation}
and
from the pointwise equality
\begin{equation}\label{5.5}
M_{{\rm dyadic},w}f_2(x)
=
\inf_{y \in Q}
M_{{\rm dyadic},w}(f\chi_{{\mathbb R}^n \setminus Q})(y)
\end{equation}
we have
\begin{equation}\label{5.6}
\lt(\frac1{|Q|^{\lm/n}}\int_{Q}(M_{{\rm dyadic},w}f_2)^p\,dx\rt)^{1/p}
\le 
\|f\|_{L^{p,\lm}_{\rm dyadic}(w)}.
\end{equation}
Combining (\ref{5.4}) and (\ref{5.6}),
we conclude (\ref{5.3}).
\end{proof}

\end{document}